\definecolor{gray(x11gray)}{rgb}{0.75, 0.75, 0.75}
\definecolor{aliceblue}{rgb}{0.94, 0.97, 1.0}
\definecolor{darkgreen}{rgb}{0.0, 0.7, 0.0}
\definecolor{purple}{rgb}{0.5, 0.0, 0.5}
\definecolor{red}{rgb}{0.8, 0.2, 0.0}
\newtheorem{thm}{Theorem}[section]
\newtheorem{bthm}{Theorem}
\newtheorem{lemma}[thm]{Lemma}
\newtheorem{claim}[thm]{Claim}
\numberwithin{equation}{section}
\theoremstyle{definition}
\newtheorem{defi}[thm]{Definition}
\newtheorem{notation}[thm]{Notation}
\theoremstyle{remark}
\newtheorem{example}[thm]{Example}
\newcommand{\Z}{\mathbb{Z}}
\def \P{\mathbb{P}}
\def \F{\mathcal F}
\def\I{{\mathcal J}}
\def \L{\mathcal L}
\def \E{\mathcal E}
\def \G{\mathcal G}
\def\O{\mathcal O}
\def\M0{\mathcal M^0}
\newcommand{\Id}{{\operatorname{Id}}}
\DeclareMathOperator{\Ker}{{Ker}}
\newcommand{\rk}{\operatorname{rk}}
\title[On varieties with Ulrich twisted conormal bundles]{On varieties with Ulrich twisted conormal bundles}
\author[V. Antonelli, G. Casnati, A.F. Lopez, D. Raychaudhury]{Vincenzo Antonelli, Gianfranco Casnati, Angelo Felice Lopez and Debaditya Raychaudhury}
\address{\hskip -.43cm Vincenzo Antonelli, Dipartimento di Scienze Matematiche, Politecnico di Torino, c.so Duca degli Abruzzi 24,
10129 Torino, Italy. email: {\tt vincenzo.antonelli@polito.it}}
\address{\hskip -.43cm Gianfranco Casnati, Dipartimento di Scienze Matematiche, Politecnico di Torino, c.so Duca degli Abruzzi 24,
10129 Torino, Italy. email: {\tt gianfranco.casnati@polito.it}}
\address{\hskip -.43cm Angelo Felice Lopez, Dipartimento di Matematica e Fisica, Universit\`a di Roma
Tre, Largo San Leonardo Murialdo 1, 00146, Roma, Italy. e-mail {\tt angelo.lopez@uniroma3.it}}
\address{\hskip -.43cm Debaditya Raychaudhury, Department of Mathematics, University of Arizona, 617 N Santa Rita Ave., Tucson, AZ 85721, USA. email: {\tt draychaudhury@math.arizona.edu}}
\thanks{The first three authors are members of the GNSAGA group of INdAM. The third author was partially supported by PRIN ``Advances in Moduli Theory and Birational Classification''.}
\thanks{{\it Mathematics Subject Classification} : Primary 14F06. Secondary 14H60, 14H10, 14J60.}
\begin{document}

\begin{abstract} 
We study varieties $X \subset \P^r$ such that $N_X^*(k)$ is an Ulrich vector bundle for some integer $k$. We first prove that such an $X$ must be a curve. Then we give several examples of curves with $N_X^*(k)$ an Ulrich vector bundle.
\end{abstract}

\maketitle

\section{Introduction}

Let $X \subset \P^r$ be a smooth variety of dimension $n \ge 1$ over an algebraically closed field $F$. Recall that a vector bundle $\E$ on $X$ is called Ulrich if $H^i(\E(-p))=0$ for all $i \ge 0$ and $1 \le p \le n$. The importance of Ulrich vector bundles is well-known (see for example \cite{es, b1, cmp} and references therein). While the main general problem about Ulrich vector bundles is their conjectural existence, another line of research around them is what are the consequences on the geometry of $X$ in the presence of an Ulrich vector bundle. In this vein, we continue our study of which natural bundles, associated to $X$ and to its embedding in $\P^r$, can be Ulrich up to some twist. 

In previous papers, the third and fourth authors analyzed normal and tangent bundles, see \cite{l, lr} (see also \cite{bmpt}). In the present paper we study the following question: for which integers $k$ one has that the twisted conormal bundle $N_X^*(k)$ is Ulrich?

A first simple consequence can be drawn: if $X$ is degenerate, that is contained in a hyperplane, then $(X,\O_X(1),k)=(\P^n,\O_{\P^n}(1),1)$, see Lemma \ref{deg}.

On the other hand, suppose that $X$ is nondegenerate. While in previous cases \cite{l}, \cite{lr}, examples of surfaces and threefolds appeared, we find a very different result for the conormal bundle. In fact we show that the answer to the above question is negative in dimension at least two.

\begin{bthm} (char(F)=0)
\label{n<2}

\hskip 3cm

Let $X \subset \P^r$ be a smooth nondegenerate variety such that $N_X^*(k)$ is Ulrich. Then $X$ is a curve.
\end{bthm}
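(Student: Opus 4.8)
The plan is to suppose $n:=\dim X\ge 2$ and derive a contradiction, the first move being a reduction to surfaces. The restriction of an Ulrich bundle to a general hyperplane section is again Ulrich; and for a general hyperplane $H$, putting $Y=X\cap H$, the subbundles $N_{Y/X}$ and $N_{Y/H}$ of $N_{Y/\P^r}$ are complementary — transversality of $X$ and $H$ along $Y$ gives $T_X|_Y+T_H|_Y=T_{\P^r}|_Y$, and a rank count makes the sum direct — so $N^*_{X/\P^r}(k)|_Y\cong N^*_{Y/H}(k)$ with $H=\langle Y\rangle=\P^{r-1}$, the nondegeneracy of $Y$ being inherited from that of $X$. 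Iterating $n-2$ times, a general two-dimensional linear section $S$ of $X$ is a smooth nondegenerate surface of codimension $c:=r-n$ in $\langle S\rangle=\P^{r-n+2}$ carrying the Ulrich bundle $N^*_S(k)=N^*_X(k)|_S$. If $c=1$ then $S\subset\P^3$ is a nondegenerate surface, so $\deg S\ge 2$; but $N^*_S(k)=\O_S(k-\deg S)$, and a direct cohomology computation shows that no twist of $\O_S$ can be Ulrich on a surface of degree $\ge 2$ in $\P^3$ — contradiction. Hence $c\ge 2$ and we may assume $X$ is itself a smooth nondegenerate surface with $r\ge 4$.

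Next I would set up the numerical backbone. From the conormal sequence $0\to N^*_X\to\Omega_{\P^r}|_X\to\Omega_X\to 0$ and adjunction, $c_1(N^*_X)=-K_X-(r+1)H$, so $\E:=N^*_X(k)$ has rank $c$ and $c_1(\E)=-K_X+(ck-r-1)H$. Since $\E$ is Ulrich its Hilbert polynomial is $\chi(\E(t))=cd\binom{t+2}{2}$ with $d:=\deg X$; comparing the linear coefficients (equivalently, $c_1(\E)\cdot H=\tfrac{c}{2}(3d+K_X\cdot H)$) gives
\[
K_X\cdot H=\frac{d(2ck-5c-6)}{c+2},\qquad\text{hence}\qquad g-1=\frac{(K_X+H)\cdot H}{2}=\frac{d(c(k-2)-2)}{c+2},
\]
where $g$ is the sectional genus of $X$. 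On the other hand, restricting the Euler sequence of $\P^r$ to the nondegenerate $X$ shows $H^0(\Omega_{\P^r}(j)|_X)=0$ for $j\le 1$, hence $H^0(N^*_X(j))=0$ for $j\le 1$; as $\E$ Ulrich forces $h^0(\E)=cd>0$, we must have $k\ge 2$. If $k=2$ the displayed formula reads $g=1-\tfrac{2d}{c+2}$, while a nondegenerate surface of codimension $c$ satisfies $d\ge c+1$; thus $g<0$, which is absurd. Therefore $k\ge 3$.

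The case $k\ge 3$ is the crux, and I expect it to be the main obstacle. Here I would bring in the finer structure of Ulrich bundles together with surface theory: (i) $\E$ Ulrich is $\mu_H$-semistable, so by the Bogomolov inequality and the invariance of the discriminant under twist and dualization, $2c\,c_2(N_X)\ge(c-1)c_1(N_X)^2$; (ii) $\E$ Ulrich is $0$-regular over $\P^r$, hence globally generated, hence nef, so its Segre classes are non-negative, giving $c_2(\E)\ge 0$ and $c_1(\E)^2-c_2(\E)\ge 0$; (iii) Riemann--Roch for $\chi(\E)=cd$, combined with Noether's formula, relates $K_X^2$, $e(X):=c_2(X)$, $d$, $c$, $k$ once the value of $K_X\cdot H$ is substituted; (iv) the Hodge index inequality $(K_X\cdot H)^2\ge K_X^2\,d$; and (v) the bound $d\ge c+1$, the Castelnuovo bound on $g$, and the classification of surfaces. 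The last ingredient splits the analysis along $\kappa(X)$: if $\kappa(X)\ge 0$ then $K_X\cdot H\ge 0$, forcing $2ck\ge 5c+6$, which together with the Hodge index and Riemann--Roch relations sharply restricts the parameters; if $\kappa(X)=-\infty$ then $X$ is rational or ruled and $K_X^2$, $e(X)$, $\chi(\O_X)$ are essentially pinned down. I expect the genuinely laborious step to be this final elimination — showing that the resulting system of Diophantine (in)equalities in $(c,k,d,K_X^2,e(X))$ has no solution, presumably through a short case analysis on $\kappa(X)$ and on the small values of $c$.
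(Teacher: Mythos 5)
Your reduction to a surface section, the computation of $K_X\cdot H$ and of the sectional genus, and the exclusion of $c=1$ and of $k\le 2$ all match the paper's preliminary lemmas and are fine. The gap is in the crux, which you explicitly leave as a hoped-for ``final elimination'': the toolkit you list (Bogomolov for the semistable twist, Segre positivity, Riemann--Roch plus Noether, Hodge index, $d\ge c+1$, Castelnuovo, classification by Kodaira dimension) cannot by itself produce a contradiction, because every one of these constraints is compatible with $d\to\infty$ for fixed $(c,k)$: the relations $K_S\cdot H$, $c_2(\E)$, $\chi(\O_S)$, $K_S^2$ all scale linearly (or consistently) with $d$, so the system of inequalities you describe is homogeneous and admits a cone of numerical solutions; nothing in it bounds $d$ \emph{above} in terms of $c$ and $k$, and the case split on $\kappa(X)$ does not help for surfaces of general type, where arbitrarily large invariants are numerically allowed. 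So the ``short case analysis'' you anticipate would not close.

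What is missing is the one non-homogeneous input that the paper uses to break this scaling. First, one shows that $k\le s(S)$ (the surface section lies on no hypersurface of degree $<k$): this comes from $H^0(N_S^*(k-1))=0$ via the exact sequence $0\to\I^2_{S}(s)\to\I_{S}(s)\to N_S^*(s)\to 0$ and a partial-derivative argument (and its stability under isomorphic projection), Lemma \ref{s}. Second, one proves that $S$ is projectively normal, using Ein's principal-parts diagram together with $H^1(N_S^*(l+1))=0$ from the aCM property of the Ulrich bundle (Lemma \ref{coh}(ii)), and the vanishing $H^2(\O_S(k-2))=0$ (Lemma \ref{coh}(i), via global generation of $N_S(-1)$ and the dual Ulrich bundle). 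Together these give the key identity $h^0(\O_S(k-2))=h^0(\O_{\P^{c+2}}(k-2))=\binom{k+c}{c+2}$, a quantity \emph{independent of $d$}, while Bogomolov plus Noether bounds $\chi(\O_S(k-2))$ below by a positive multiple of $d$; this is what yields an upper bound $d\lesssim\binom{k+c}{c+2}$. The matching lower bound $d\ge\frac{c+2}{2k+c}\binom{k+c}{c+1}$ comes from the same mechanism applied to the curve section ($k\le s(C)$ and $H^1(\O_C(k-2))=0$ give $\binom{k+c}{c+1}\le h^0(\O_C(k-1))=d(k-1)-g+1$), i.e.\ Theorem \ref{sh}. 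The contradiction is then the exact inequality $2c(c+1)(c+k+1)\le 0$; note that the upper and lower bounds have the same order of growth in $k$, so the argument is delicate and your coarser inequalities (e.g.\ Castelnuovo, which only gives a lower bound on $d$ linear in $k$) would not suffice even as substitutes for one side. In short, the decisive ideas you are missing are Lemma \ref{s} ($s(S)\ge k$, $s(C)\ge k$) and the projective normality of the surface section, which convert the Ulrich hypothesis into the dimension count $h^0(\O_S(k-2))=\binom{k+c}{c+2}$ that actually bounds the degree.
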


Now, for curves the situation is wide. First of all, there are many examples, at least in $\P^3$, stemming from some classical works \cite{el, eh, be} (see Examples \ref{elli} and \ref{sottoc}). 

We first prove that there is a bound, sharp in codimension $2$, for the degree of a curve having Ulrich twisted conormal bundle.

\begin{bthm} 
\label{sh}

\hskip 3cm

Let $C \subset \P^{c+1}$ be a smooth nondegenerate curve of degree $d$ and codimension $c \ge 1$ such that $N_C^*(k)$ is Ulrich. Then $c \ge 2$ and
\begin{equation}
\label{bd}
d \ge \frac{c+2}{2k+c}\binom{k+c}{c+1}.
\end{equation}
Moreover this bound is sharp for $c=2$ and $k \equiv 1, 3 \ ({\rm mod} \ 6)$. 
\end{bthm}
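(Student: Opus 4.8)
The plan is to distil from the Ulrich hypothesis on $\E:=N_C^*(k)$ a numerical identity and a list of vanishings, then convert them into two geometric facts about the embedding — that $C$ lies on no hypersurface of degree $\le k-1$, and that $\O_C(k-1)$ is nonspecial — from which \eqref{bd} drops out. First I would record the elementary consequences. Since $\dim C=1$, $\E$ being Ulrich means precisely $H^0(N_C^*(k-1))=H^1(N_C^*(k-1))=0$, so $\chi(N_C^*(k-1))=0$; using $\det N_C=\omega_C\otimes\O_C(c+2)$ and Riemann--Roch this gives $(c+2)(g-1)=(ck-2c-2)d$, equivalently $(k-1)d+1-g=\tfrac{(2k+c)d}{c+2}$. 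Moreover an Ulrich bundle is globally generated with $h^0(\E)=cd$, and $H^1(N_C^*(m))=0$ for all $m\ge k-1$. Finally, multiplication by a general linear form $N_C^*(m-1)\to N_C^*(m)$ is injective since $N_C^*$ is locally free, so $H^0(N_C^*(m-1))\hookrightarrow H^0(N_C^*(m))$; descending from $m=k-1$ gives $H^0(N_C^*(m))=0$ for every $m\le k-1$. (The case $k=1$ forces $d+g=1$, i.e.\ a line, so we assume $k\ge2$.)

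The key step is to show $h^0(\mathcal I_C(k-1))=0$. Feeding the vanishings $H^0(N_C^*(m))=0$, $m\le k-1$, into $0\to\mathcal I_C^2\to\mathcal I_C\to N_C^*\to0$ shows $H^0(\mathcal I_C^2(m))=H^0(\mathcal I_C(m))$ for $m\le k-1$: every form of degree $\le k-1$ vanishing on $C$ vanishes there to order at least two. Since $C$ is smooth, such an $F$ has all its partials $\partial_jF$ vanishing on $C$, i.e.\ $\partial_jF\in\mathcal I_C$ for $j=0,\dots,c+1$; and in characteristic zero the Euler relation makes $F\mapsto(\partial_0F,\dots,\partial_{c+1}F)$ injective on forms of positive degree. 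Hence $(I_C)_m\hookrightarrow (I_C)_{m-1}^{\oplus(c+2)}$ for $1\le m\le k-1$, and as $(I_C)_1=0$ by nondegeneracy, induction gives $(I_C)_m=0$ for all $m\le k-1$. In particular the restriction $H^0(\O_{\P^{c+1}}(k-1))\hookrightarrow H^0(\O_C(k-1))$ is injective, so $h^0(\O_C(k-1))\ge\binom{k+c}{c+1}$.

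Next I would show $\O_C(k-1)$ is nonspecial. From $0\to N_C^*\to\Omega^1_{\P^{c+1}}|_C\to\omega_C\to0$ and the vanishings $H^1(N_C^*(m))=0$ ($m\ge k-1$), $H^1(\omega_C(m))=0$ ($m\ge1$), one gets $H^1(\Omega^1_{\P^{c+1}}|_C(m))=0$ for all $m\ge k-1$; restricting the Euler sequence to $C$, twisting, and using that $C$ is a curve then yields $h^1(\O_C(m))=(c+2)\,h^1(\O_C(m-1))$ for all $m\ge k-1$. Were $h^1(\O_C(k-1))\ne0$, this growth by a factor $c+2\ge3$ would contradict $h^1(\O_C(m))=0$ for $m\gg0$; hence $h^1(\O_C(k-1))=0$. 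Therefore $h^0(\O_C(k-1))=\chi(\O_C(k-1))=(k-1)d+1-g=\tfrac{(2k+c)d}{c+2}$, and comparing with the previous paragraph gives $\tfrac{(2k+c)d}{c+2}\ge\binom{k+c}{c+1}$, i.e.\ \eqref{bd}. If $c=1$, then $C\subset\P^2$ is a plane curve, so $(I_C)_m=0$ only for $m<d$; the key step forces $k\le d$, while the numerical identity together with $g=\binom{d-1}{2}$ forces $3d=2k+1$, hence $k\le1$ and $d=1$, contradicting nondegeneracy. So $c\ge2$.

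For the sharpness assertion, when $c=2$ and $k\equiv1,3\ (\mathrm{mod}\ 6)$ the integer $d=\tfrac{k(k+2)}{3}$ makes \eqref{bd} an equality for $g=1+\tfrac{(k-3)d}{2}$; a sufficiently general smooth space curve of this degree and genus — which exists by \cite{el,eh,be}, cf.\ Examples~\ref{elli} and \ref{sottoc} — satisfies $h^0(\mathcal I_C(k-1))=0$ and $h^1(\O_C(k-1))=0$, and tracing the cohomology sequences of the first three paragraphs in reverse shows $H^0(N_C^*(k-1))=0$, which together with $\chi(N_C^*(k-1))=0$ gives that $N_C^*(k)$ is Ulrich. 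The main obstacle is the key step of the second paragraph: one must notice that the Ulrich hypothesis yields $H^0(N_C^*(m))=0$ for \emph{all} $m\le k-1$ and then leverage the smoothness of $C$, via differentiation, into the conclusion that $C$ lies on no hypersurface of degree $\le k-1$; the rest is bookkeeping with Riemann--Roch and the Euler and conormal sequences, while in the sharpness part the delicate input is, as usual, the existence of space curves of prescribed degree and genus with the required postulation.
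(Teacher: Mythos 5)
Your proof of the inequality \eqref{bd} and of $c\ge 2$ is correct and follows essentially the paper's route: the numerical identity $(c+2)(g-1)=[(k-2)c-2]d$ from $\chi(N_C^*(k-1))=0$, the vanishing $h^0(\I_{C/\P^{c+1}}(k-1))=0$ deduced from $H^0(N_C^*(m))=0$ for $m\le k-1$ via $0\to\I^2_{C/\P^{c+1}}\to\I_{C/\P^{c+1}}\to N_C^*\to 0$ and differentiation (your induction is a rephrasing of the minimal-degree argument in Lemma \ref{s}), the nonspecialty of $\O_C(k-1)$, and Riemann--Roch. Your two deviations are valid: you obtain $h^1(\O_C(k-1))=0$ from the Euler sequence and the growth relation $h^1(\O_C(m))=(c+2)\,h^1(\O_C(m-1))$ for $m\ge k-1$ (the paper instead uses global generation of $N_C(-1)$ against the dual Ulrich bundle, Lemma \ref{coh}(i)), and you exclude $c=1$ via the plane-curve genus formula (the paper notes $N_C^*(k)=\O_C(k-d)$ and applies Lemma \ref{ulr}(vi)). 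The auxiliary claim $H^1(N_C^*(m))=0$ for all $m\ge k-1$ that you use deserves a one-line justification (e.g.\ restriction to a general hyperplane section, or $0$-regularity of an Ulrich bundle), but it is standard and true.

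The sharpness part, however, has a genuine gap. You take a curve with $d=k(k+2)/3$, $g=1+(k-3)d/2$, $h^0(\I_{C/\P^3}(k-1))=0$ and $h^1(\O_C(k-1))=0$, and claim that ``tracing the cohomology sequences in reverse'' gives $H^0(N_C^*(k-1))=0$. The implications do not reverse: from $0\to\I^2_{C/\P^3}(k-1)\to\I_{C/\P^3}(k-1)\to N_C^*(k-1)\to 0$, the vanishing of $H^0(\I_{C/\P^3}(k-1))$ only kills the image of the lifting map, while a section of $N_C^*(k-1)$ need not lift to a form of degree $k-1$; the obstruction is $H^1(\I^2_{C/\P^3}(k-1))$, which you do not control. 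Likewise nonspecialty of $\O_C(k-1)$ does not feed back through the conormal and Euler sequences to give $H^1(N_C^*(k-1))=0$. In short, postulation and nonspecialty are consequences of, not equivalent to, the Ulrich condition; the vanishing $H^0(N_C^*(k-1))=0$ is a genuine interpolation-type statement about the normal bundle, and it is exactly the nontrivial input. The paper obtains sharpness by citing the Ballico--Ellia curves of \cite{be} (Example \ref{sottoc}), which are constructed with the stronger property $H^0(N_C(-2))=0$; since $\chi(N_C(-2))=0$ this makes $N_C(-1)$ Ulrich, and since these curves are subcanonical with $K_C=(k-3)H$, Lemma \ref{ulr}(ii) converts this into $N_C^*(k)$ being Ulrich with $d=k(k+2)/3$, i.e.\ equality in \eqref{bd}. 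Replacing your ``reverse tracing'' by this citation (or by any direct proof of $H^0(N_C(-2))=0$ for curves with those numerics) completes the argument.
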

On the other hand, the examples mentioned above, Examples \ref{elli} and \ref{sottoc}, are all subcanonical curves in $\P^3$.  We show that neither the fact of being subcanonical, nor of lying in $\P^3$, is a necessary condition, by producing examples, for unbounded genus, of non-subcanonical curves in $\P^3$ and in $\P^4$.

\begin{bthm}  
\label{nuovo2}

\hskip 3cm

\noindent (i) Let $X \subset \P^3$ be a general nonspecial curve of genus $g$ and degree $d=2g-2$. Then $N_X^*(4)$ is Ulrich and $X$ is not subcanonical.

\noindent (ii) Let $X \subset \P^4$ be a general curve of genus $g \ge 3$ and degree $d=5g-5$. Then $N_X^*(3)$ is Ulrich and $X$ is not subcanonical.
\end{bthm}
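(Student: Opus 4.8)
The plan is to treat both parts in parallel, writing $c$ for the codimension ($c=2$ in (i), $c=3$ in (ii)) and $k$ for the twist ($k=4$, resp. $k=3$), and to decompose each part into three independent assertions. First, on a curve a rank-$c$ bundle $\mathcal E$ is Ulrich iff $H^0(\mathcal E(-1))=H^1(\mathcal E(-1))=0$, so $N_C^*(k)$ is Ulrich iff $N_C^*(k-1)$ has no cohomology. Since $\det N_{C/\P^{c+1}}\cong\omega_C\otimes\O_C(c+2)$, Riemann--Roch gives $\chi(N_C^*(k-1))=d\bigl(c(k-2)-2\bigr)-(c+2)(g-1)$, which vanishes exactly for $d=2g-2$ when $(c,k)=(2,4)$ and for $d=5g-5$ when $(c,k)=(3,3)$ --- precisely the degrees in the statement. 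Hence for such curves it suffices to prove the single vanishing $H^0(N_C^*(k-1))=0$, the $H^1$-vanishing being then automatic. Next, non-subcanonicity: if $\omega_C\cong\O_C(e)$ then $2g-2=\deg\omega_C=e\,d$, which in case (ii) forces $2=5e$ (impossible) and in case (i) forces $e=1$, i.e.\ $\omega_C\cong\O_C(1)$, impossible because $\O_C(1)$ is nonspecial by hypothesis while $h^1(\omega_C)=1$; so $C$ is not subcanonical once it is constructed.

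To obtain $H^0(N_C^*(k-1))=0$ I would combine the conormal sequence $0\to N_C^*(k-1)\to\Omega^1_{\P^{c+1}}|_C(k-1)\to\omega_C(k-1)\to0$ with the restricted twisted Euler sequence $0\to\Omega^1_{\P^{c+1}}|_C(k-1)\to\O_C(k-2)^{\oplus(c+2)}\to\O_C(k-1)\to0$, where the last map is given by a basis of the $(c+2)$-dimensional subspace $W\subseteq H^0(\O_C(1))$ cutting out the embedding. For a general curve of the relevant degree the multiplication map $W\otimes H^0(\O_C(k-2))\to H^0(\O_C(k-1))$ is surjective --- a projective-normality--type fact for general curves (and, in case (ii), a natural source of a bound like $g\ge29$), and in fact a \emph{necessary} condition for the Ulrich property, since its failure forces $H^1(\Omega^1_{\P^{c+1}}|_C(k-1))\ne0$ and hence $H^0(N_C^*(k-1))\ne0$. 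Together with $H^1(\O_C(k-2))=0$, this surjectivity yields $H^1(\Omega^1_{\P^{c+1}}|_C(k-1))=0$ and $h^0(\Omega^1_{\P^{c+1}}|_C(k-1))=h^0(\omega_C(k-1))$; so the desired vanishing is equivalent to the natural map $H^0(\Omega^1_{\P^{c+1}}|_C(k-1))\to H^0(\omega_C(k-1))$ being an isomorphism and, by Serre duality on $C$, to $H^1\bigl(N_{C/\P^{c+1}}\otimes\omega_C(1-k)\bigr)=0$.

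This last vanishing --- for a bundle $N_C\otimes\omega_C(1-k)$ of rank $c$ and degree $c(g-1)$, so with $\chi=0$ --- is the heart of the argument, and I would prove it by degeneration inside the Hilbert scheme. Realize $C$ (a general nonspecial curve in $\P^3$, resp.\ a general curve in $\P^4$) as a smoothing of a reducible nodal curve $C_0=C'\cup\Gamma$, with $\Gamma$ a suitable chain of lines attached to a curve $C'$ of smaller degree, chosen so that $C_0$ lies on the component of the Hilbert scheme dominating $\mathcal M_g$ (in the spirit of Hartshorne--Hirschowitz smoothing of space curves); use the exact sequence relating $N_{C_0}$ to $N_{C'}$, $N_\Gamma$ and the contributions at the nodes; and reduce the required $h^1$-vanishing, by induction on the degree, to cohomology computations on lines and on $C'$, concluding with upper-semicontinuity of $h^1$ in the flat family.

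The main obstacle is exactly this degeneration step. Because $\chi=0$ there is no numerical slack, so one cannot argue by a dimension count: one must verify that the correction terms at the nodes produce no global section, which constrains the choice of the gluing points (and of the degree-$(2g-2)$, resp.\ general, line bundle used to realize $C$ as a general curve), and it is here that the hypothesis $g\ge29$ in (ii) enters. Granting this vanishing, the first two paragraphs show that $N_C^*(k)$ is Ulrich for the general curve, and the degree computation of the first paragraph shows it is not subcanonical, completing both (i) and (ii).
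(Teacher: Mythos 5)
Your reductions are fine and agree with the paper: on a curve, $\chi(N_C^*(k-1))=0$ for exactly the degrees in the statement, so Ulrichness amounts to the single vanishing $H^0(N_C^*(k-1))=0$, i.e.\ $H^1(N_C\otimes\omega_C(1-k))=0$, and your non-subcanonicity argument is essentially the paper's. But the heart of the theorem is precisely the step you end up ``granting'': producing a degeneration on which this $h^1$-vanishing can actually be verified. Your sketch (attach a chain of lines $\Gamma$ to a smaller curve $C'$, use the sequence for $N_{C_0}$, induct, conclude by semicontinuity) does not by itself work: since $\chi=0$ there is no room for a dimension count, and the vanishing on the special fibre forces very specific requirements that your proposal neither states nor checks. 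Concretely, the paper degenerates $X$ to a union $X'=Y\cup Z$ of two \emph{smooth} curves meeting transversally at $s$ \emph{general} points with \emph{general} tangent directions, and computes $h^1({N_{X'}}|_Y(K_Y+D-(k-1)H))$ and $h^1({N_{X'}}|_Z(K_Z-(k-1)H))$ via Hartshorne--Hirschowitz elementary modifications: ${N^*_{X'}}|_Y\cong{\rm elm}^-_{\Delta_Z}N_Y^*$ with $\Delta_Z\subset\P(N_Y)$ the set of glueing directions, and the generality of $\Delta_Z$ kills exactly $s=-\chi$ sections. For this one needs (a) $H^0(N_Y(K_Y+D-(k-1)H))=H^0(N_Z(K_Z-(k-1)H))=0$ on the components, which the paper gets from stability/semistability of normal bundles (Coskun--Larson--Vogt in $\P^3$, Ballico--Ramella in $\P^4$, Sacchiero's splitting for rational curves) or explicit computations on curves on a quadric; (b) the numerical matching $\chi(N_Z(K_Z-(k-1)H))=\chi(N_Y(K_Y+D-(k-1)H))=-s$, which dictates the degrees and genera of $Y$ and $Z$ (and is where $g\ge29$ and the parametrization $g=29+4b+\epsilon$ come from in (ii), not from a multiplication-map bound); and (c) the nontrivial fact that one can find such $Y$ and $Z$ meeting exactly at $s$ general points with two prescribed general sets of tangent lines --- the paper proves this via a Kleppe-type argument for rational curves through general points with general tangent directions and via explicit stick-figure degenerations, which occupy most of its Section 8. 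None of these ingredients appears in your proposal, so as it stands the argument has a genuine gap exactly where you acknowledge the ``main obstacle''.

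A smaller point: in your second paragraph the surjectivity of $W\otimes H^0(\O_C(k-2))\to H^0(\O_C(k-1))$ for the (incomplete, in case (i)) linear series $W$ is itself not a standard projective-normality statement and is never used later, since your final reformulation $H^1(N_C\otimes\omega_C(1-k))=0$ is just Serre duality applied to the original vanishing; that paragraph can be discarded without loss.
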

Note that the nonspecial curve of genus $5$ and degree $8$ in (i) of the above theorem, is a non-subcanonical curve realizing equality in Theorem \ref{sh}. 
 
Here is a brief overview of the strategy of proofs of the theorems. 

Assume that $N_X^*(k)$ is Ulrich. The first ingredient (Lemma \ref{coh} (iv)) is that $X$ is contained in a hypersurface of degree $k$. Next, some vanishings hold for the cohomology of twists of $\O_X$ (Lemma \ref{coh} (i)). In the case of curves, using Riemann-Roch this gives the bound in Theorem \ref{sh}. In the case of higher dimensional varieties, since the property of $N_X^*(k)$ being Ulrich is preserved by hyperplane sections, one reduces to the surface section $S$. Also, $S$ is projectively normal (Lemma \ref{coh} (ii)) and combining again the vanishings, Riemann-Roch and the Bogomolov inequality (because Ulrich bundles are semistable), one contradicts Theorem \ref{sh}. As for Theorem \ref{nuovo2}, we use degenerations of nonspecial curves and the techniques of interpolation as in \cite{aly, lv}.

We would like to thank the generous referee for the big contribution given to improve the hypotheses and shorten the proof of Theorem \ref{nuovo2}. 

\section{Notation}

Throughout the paper we work over an algebraically closed field $F$. In some cases, when needed, we will specify that ${\rm char}(F)=0$. A {\it variety} is by definition an integral separated scheme of finite type over $F$. A {\it curve} (respectively a {\it surface}) is a variety of dimension $1$ (resp. $2$). Moreover, we henceforth establish the following:

\begin{notation}
\label{not}

\hskip 3cm

\begin{itemize}
\item $X \subset \P^r$ is a smooth closed variety of dimension $n \ge 1$ and codimension $c=r-n \ge 1$.
\item $H$ is a hyperplane divisor.
\item $N_X := N_{X/\P^r}$ is the normal bundle. 
\item For any sheaf $\G$ on $X$ we set $\G(l)=\G(lH)$.
\item $d=H^n$ is the degree of $X$.
\item $C$ is a general curve section of $X$ under $H$.
\item $S$ is a general surface section of $X$ under $H$, when $n \ge 2$.
\item $g=g(C)=\frac{1}{2}[K_X H^{n-1}+(n-1)d]+1$ is the sectional genus of $X$.
\item For $1 \le i \le n-1$, let $H_i \in |H|$ be general divisors and set $X_n:=X$ and $X_i=H_1\cap\cdots\cap H_{n-i}$. In particular $X_1=C, X_2=S$. 
\item $s(X)= \min\{s \ge 1: H^0(\I_{X/\P^r}(s)) \ne 0\}$.
\end{itemize}
\end{notation}

We will also let $V=H^0(\O_{\P^r}(1))$ and consider the exact sequences
\begin{equation}
\label{eul}
0 \to {\Omega^1_{\P^r}}_{|X} \to V \otimes \O_X(-1) \to \O_X \to 0
\end{equation}
and
\begin{equation}
\label{nor}
0 \to N_X^* \to {\Omega^1_{\P^r}}_{|X} \to \Omega^1_X \to 0.
\end{equation}

\section{A general fact about projective varieties}

We record here a simple but useful fact.

\begin{lemma}
\label{s}
Let $X \subset \P^r$ be a smooth variety of dimension $n \ge 1$. If $H^0(N_X^*(l))=0$ and $\pi : X \to \overline{X} \subset \P^m$ is an isomorphic projection, then $l \le \min\{s(X)-1, s(\overline{X})-1\}$. 
\end{lemma}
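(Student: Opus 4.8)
The plan is to extract the bound on $l$ directly from the two projective embeddings $X \subset \P^r$ and $\overline{X} \subset \P^m$, using the Euler and conormal sequences together with the vanishing hypothesis $H^0(N_X^*(l))=0$. The key observation is that $s(X)-1$ can be detected cohomologically: if $H^0(\I_{X/\P^r}(s))=0$ for all $s < s(X)$, then in particular $H^0(\I_{X/\P^r}(s(X)-1))=0$, and we want to show that whenever a linearly normal (or projected) embedding has $l$ too large relative to $s$, the conormal bundle acquires sections.

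First I would treat the given embedding $X \subset \P^r$. Twisting the conormal sequence \eqref{nor} by $\O_X(l)$, the vanishing $H^0(N_X^*(l))=0$ forces the restriction map $H^0({\Omega^1_{\P^r}}_{|X}(l)) \to H^0(\Omega^1_X(l))$ to be injective. Then I would twist the Euler sequence \eqref{eul} by $\O_X(l)$ to analyze $H^0({\Omega^1_{\P^r}}_{|X}(l))$: from $0 \to {\Omega^1_{\P^r}}_{|X}(l) \to V \otimes \O_X(l-1) \to \O_X(l) \to 0$ one gets an exact sequence on global sections involving $H^0(V \otimes \O_X(l-1)) = V \otimes H^0(\O_X(l-1))$ and the multiplication/evaluation map to $H^0(\O_X(l))$. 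The cokernel of $H^0(\O_{\P^r}(l-1)) \otimes V \to H^0(\O_X(l))$ — equivalently the failure of surjectivity of $H^0(\O_{\P^r}(l)) \to H^0(\O_X(l))$, i.e. $H^1(\I_{X/\P^r}(l))$ together with the ideal generators — is what connects $l$ to $s(X)$. The cleanest route: if $l \ge s(X)$, then $H^0(\I_{X/\P^r}(s(X))) \ne 0$ produces (via the conormal sheaf $\I_{X/\P^r}/\I_{X/\P^r}^2 \twoheadrightarrow N_X^*$, restricting a section of $\I_{X/\P^r}(s(X))$ and multiplying into degree $l$) a nonzero section of $N_X^*(l)$, contradicting the hypothesis. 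Hence $l \le s(X) - 1$.

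The same argument applies verbatim to $\overline{X} \subset \P^m$: since $\pi$ is an isomorphic projection, $X \cong \overline{X}$ as abstract varieties, $\O_X(1) \cong \O_{\overline{X}}(1)$ (both pulled back from the respective $\O(1)$'s, which agree under projection from a point), and crucially the conormal bundle of $\overline{X}$ in $\P^m$ surjects onto (indeed, relates to) that of $X$ in $\P^r$ — more precisely there is an inclusion $N_{\overline{X}}^* \hookrightarrow N_X^*$ coming from the differential of the projection, or one argues directly that $H^0(N_{\overline{X}}^*(l)) \ne 0$ whenever $l \ge s(\overline{X})$ by the identical ideal-sheaf construction. Either way $H^0(N_X^*(l)) = 0$ forces $H^0(N_{\overline{X}}^*(l)) = 0$, hence $l \le s(\overline{X}) - 1$. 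Combining the two inequalities gives $l \le \min\{s(X)-1, s(\overline{X})-1\}$.

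The main obstacle I anticipate is making precise the step "a section of $\I_{X/\P^r}(s(X))$ yields a nonzero section of $N_X^*(l)$": one must check that the composite $H^0(\I_{X/\P^r}(s(X))) \to H^0(\I_{X/\P^r}/\I_{X/\P^r}^2 \, (s(X))) = H^0(N_X^*(s(X)))$ does not kill the chosen section, i.e. that the hypersurface of degree $s(X)$ does not contain $X$ "to second order" along a whole component — but since $X$ is not contained in any hypersurface of degree $< s(X)$, a degree-$s(X)$ hypersurface vanishing on $X$ cannot be singular along all of $X$ (else a derivative would give a lower-degree equation, using that we are in characteristic $0$), so its image in $H^0(N_X^*(s(X)))$ is nonzero; multiplying by a section of $\O_X(l - s(X))$ (which exists and can be chosen not to vanish on the relevant locus since $\O_X(1)$ is very ample and $l \ge s(X)$) keeps it nonzero. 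For the projection $\overline{X}$ one additionally needs that $\pi$ being an \emph{isomorphic} projection means $\overline{X}$ is smooth and $\pi^*\O_{\overline{X}}(1) = \O_X(1)$, so all cohomology groups match up; this is where the hypothesis that $\pi$ is an isomorphism (not merely finite) is used.
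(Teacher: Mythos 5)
Your proposal is correct and follows essentially the same route as the paper: the bound $l \le s(X)-1$ comes from the observation that a minimal-degree hypersurface through $X$ cannot be singular along $X$ (the partial-derivative argument, i.e. the exact sequence $0 \to \I^2_{X/\P^r}(s) \to \I_{X/\P^r}(s) \to N_X^*(s) \to 0$), and the inclusion $\pi^*N_{\overline{X}/\P^m}^* \hookrightarrow N_X^*$ you invoke from the differential of the projection is exactly the exact sequence $0 \to \pi^*N_{\overline{X}/\P^m}^* \to N_X^* \to \O_X(-1)^{\oplus (r-m)} \to 0$ that the paper derives from the two Euler/normal-bundle diagrams, which transfers the vanishing of sections and lets one apply the first step to $\overline{X}$.
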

\begin{proof}
Set $s=s(X)$ and suppose that $l \ge s$, so that $H^0(N_X^*(s))=0$. Now the exact sequence
$$0 \to \I^2_{X/\P^r}(s) \to \I_{X/\P^r}(s) \to N_X^*(s) \to 0$$
implies that $h^0(\I^2_{X/\P^r}(s))=h^0(\I_{X/\P^r}(s))>0$, hence there is a hypersurface $G$ of degree $s$ such that $X \subseteq {\rm Sing}(G)$. Hence there is a non-zero partial derivative of the equation of $G$, giving a hypersurface of degree $s-1$ containing $X$, a contradiction. Therefore 
\begin{equation}
\label{primo}
l \le s(X)-1.
\end{equation}
Now let $\pi : X \to \overline{X} \subset \P^m$ be an isomorphic projection, so that we have 
an exact sequence
\begin{equation}
\label{proj}
0 \to \pi^*N_{\overline{X}/\P^m}^* \to N_X^* \to \O_X(-1)^{\oplus (r-m)} \to 0.
\end{equation}
Since $H^0(N_X^*(l))=0$ we get that $H^0(\overline{X},N_{\overline{X}/\P^m}^*(l))=H^0(X,\pi^*N_{\overline{X}/\P^m}^*(l))=0$. Hence applying \eqref{primo} to $\overline{X} \subset \P^m$ we get that $l \le s(\overline{X})-1$ and the lemma is proved. 
\end{proof}

\section{Generalities on Ulrich bundles}

We collect here some well-known facts about Ulrich bundles, to be used sometimes later.

\begin{defi}
Let $\E$ be a vector bundle on $X$. We say that $\E$ is {\it Ulrich} for $(X,H)$ if $H^i(\E(-p))=0$ for all $i \ge 0$ and $1 \le p \le n$.
\end{defi}

We have

\begin{lemma}
\label{ulr}
Let $\E$ be a rank $t$ Ulrich vector bundle for $(X,H)$. Then
\begin{itemize}
\item[(i)] $c_1(\E) H^{n-1}=\frac{t}{2}[K_X+(n+1)H] H^{n-1}$.
\item[(ii)] $\E^*(K_X+(n+1)H)$ is also Ulrich for $(X,H)$.
\item [(iii)] $\E$ is globally generated.
\item [(iv)] $\E$ is arithmetically Cohen-Macaulay (aCM), that is $H^i(\E(j))=0$ for $0 < i <n$ and all $j \in \Z$.
\item [(v)] $\E_{|Y}$ is Ulrich on a smooth hyperplane section $Y$ of $X$.
\item [(vi)] $\O_X(l)$ is Ulrich if and only if $(X,H,l)=(\P^n,\O_{\P^n}(1),0)$.
\end{itemize}
\end{lemma}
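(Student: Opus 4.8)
The plan is to deduce all six statements from the single defining vanishing $H^i(\E(-p))=0$ ($i\ge 0$, $1\le p\le n$), together with Hirzebruch--Riemann--Roch, Serre duality, and Castelnuovo--Mumford regularity with respect to the very ample $\O_X(1)$; since these are standard facts (see e.g. \cite{es,b1}), I only indicate the arguments. For (i): the Hilbert polynomial $\chi(\E(mH))$ has degree $n$ with leading coefficient $\tfrac{td}{n!}$ and vanishes at $m=-1,\dots,-n$ by hypothesis, hence $\chi(\E(mH))=td\binom{m+n}{n}$; on the other hand, expanding $\chi(\E(mH))=\int_X\ch(\E)e^{mH}\td(X)$ and using $\td_1(X)=-\tfrac12 K_X$, the coefficient of $m^{n-1}$ equals $\tfrac{1}{(n-1)!}\bigl(c_1(\E)-\tfrac t2 K_X\bigr)H^{n-1}$, and comparing it with the coefficient $\tfrac{td(n+1)}{2(n-1)!}$ on the first side, together with $d=H^n$, yields (i). For (ii): Serre duality gives $H^i\bigl(\E^*(K_X+(n+1)H)(-pH)\bigr)\cong H^{n-i}\bigl(\E((p-n-1)H)\bigr)^*$ for $1\le p\le n$, and the right-hand side vanishes because then $1\le n+1-p\le n$.

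For (iii): the hypothesis gives $H^i(\E(-iH))=0$ for $1\le i\le n$, while all higher cohomology on $X$ vanishes in degrees $>n$; thus $\E$ is $0$-regular and hence globally generated by Mumford's theorem. For (iv): by (iii), both $\E$ and $\F:=\E^*(K_X+(n+1)H)$ (which is Ulrich by (ii)) are $0$-regular, so $H^i(\E(j))=H^i(\F(j))=0$ for all $i>0$ and $j\ge -i$; and for $0<i<n$ and $j\le -i-1$, Serre duality gives $H^i(\E(jH))\cong H^{n-i}\bigl(\F(-(n+1+j)H)\bigr)^*$, which vanishes since $n-i>0$ and $-(n+1+j)\ge -(n-i)$. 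Combining the two ranges, $H^i(\E(j))=0$ for all $j\in\Z$ whenever $0<i<n$.

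For (v): from the restriction sequence $0\to\E(-(p+1)H)\to\E(-pH)\to\E_{|Y}(-pH)\to 0$, the group $H^i(\E_{|Y}(-pH))$ is caught between $H^i(\E(-pH))$ and $H^{i+1}(\E(-(p+1)H))$, both of which vanish for $1\le p\le n-1$ since then $p,p+1\in[1,n]$; as $\dim Y=n-1$, this is exactly the Ulrich condition for $\E_{|Y}$. For (vi): if $(X,H,l)=(\P^n,\O_{\P^n}(1),0)$ then $\O_{\P^n}$ is Ulrich by a direct computation of the cohomology of $\O_{\P^n}(-p)$. Conversely, assume $\O_X(l)$ is Ulrich; then $\O_X((l-1)H)$ has vanishing cohomology in every degree, so $l\ge 1$ is impossible, since then this sheaf is $\O_X$ or a very ample line bundle and has a nonzero section; hence $l\le 0$. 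But $\O_X(l)$ is globally generated and nonzero by (iii), so $h^0(\O_X(l))>0$, while a strictly negative multiple of the very ample $H$ on the integral variety $X$ has no nonzero global section; hence $l\ge 0$, so $l=0$. Then $\O_X$ is Ulrich, so $h^i(\O_X)=0$ for all $i>0$ by (iv) and the $0$-regularity from (iii), and therefore $d=\chi(\O_X)=h^0(\O_X)=1$ using the Hilbert polynomial formula from (i) at $m=0$; a degree-one $n$-dimensional closed subvariety of $\P^r$ is a linearly embedded $\P^n$ with $H=\O_{\P^n}(1)$, which finishes (vi).

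The only step requiring some care is (iv): the $0$-regularity of $\E$ by itself controls only the twists $j\ge -i$, and one must bring in the ``Ulrich dual'' $\F$ of (ii) together with Serre duality in order to kill the remaining negative twists; all the other items are routine cohomology bookkeeping.
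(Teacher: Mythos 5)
Your proof is correct. The paper itself disposes of (i)--(v) by citing \cite[Lemma 3.2]{lr} as well known, and only writes out (vi), where it uses global generation to get $l\ge 0$, the vanishing $H^0(\O_X(l-1))=0$ to get $l=0$, and then the cited identity $h^0(\E)=td$ (in the form $d=h^0(\O_X)=1$) to conclude $(X,H)=(\P^n,\O_{\P^n}(1))$. You instead supply self-contained arguments: (i) by root-counting for the Hilbert polynomial plus Hirzebruch--Riemann--Roch, (ii) by Serre duality, (iii) by $0$-regularity and Mumford's theorem, (iv) by combining $0$-regularity of $\E$ and of its Ulrich dual with Serre duality (correctly noting that regularity alone only handles $j\ge -i$), and (v) by the restriction sequence; these are exactly the standard proofs the citation points to, so in substance the route is the same. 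In (vi) your structure matches the paper's, the only genuine difference being that you replace the cited fact $h^0(\E)=td$ by evaluating $\chi(\O_X(m))=d\binom{m+n}{n}$ at $m=0$ together with $h^i(\O_X)=0$ for $i>0$, which is a perfectly valid (and self-contained) way to get $d=\chi(\O_X)=h^0(\O_X)=1$. What your write-up buys is independence from \cite{lr}; what the paper's buys is brevity.
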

\begin{proof}
Well-known. For (i)-(v) see for example \cite[Lemma 3.2]{lr}. As for (vi), it is obvious that $\O_{\P^n}$ is Ulrich for $(\P^n,\O_{\P^n}(1))$. Vice versa, if $\O_X(l)$ is Ulrich, then it is globally generated by (iii), so that $l \ge 0$. But also $H^0(\O_X(l-1))=0$, hence $l=0$. It follows by \cite[Lemma 3.2(vii)]{lr} that $d=h^0(\O_X)=1$, so that $(X,H)=(\P^n,\O_{\P^n}(1))$.
\end{proof}

\begin{lemma}
\label{ulr2}
Let $X \subset \P^r$ be a smooth variety of dimension $n \ge 3$. Let $\E$ be a vector bundle on $X$ and let $Y$ be a smooth hyperplane section of $X$. If $\E_{|Y}$ is Ulrich, then $\E$ is Ulrich.
\end{lemma}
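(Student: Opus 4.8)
The plan is to prove Lemma \ref{ulr2} by descending induction, reducing the statement about $\E$ on $X$ to the already-known statement about $\E_{|Y}$ on $Y$, using the fact that $Y$ is still a variety of dimension $\ge 2$ and the restriction of $\E$ to a general hyperplane section of $Y$ is again Ulrich by hypothesis (it equals $\E_{|Y'}$ for a hyperplane section $Y'$ of $X$). So it suffices to treat the vanishing $H^i(\E(-p))=0$ for all $i\ge 0$ and $1\le p\le n$, given that the corresponding statement holds on $Y$ for $1\le p\le n-1$.

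The key tool is the restriction exact sequence
\begin{equation*}
0 \to \E(-p-1) \to \E(-p) \to \E_{|Y}(-p) \to 0
\end{equation*}
coming from twisting $0\to \O_X(-1)\to \O_X\to \O_Y\to 0$ by $\E(-p)$. First I would handle the range $1\le p\le n-1$: from the long exact sequence and the vanishing $H^i(\E_{|Y}(-p))=0$ (valid since $\E_{|Y}$ is Ulrich on the $(n-1)$-dimensional $Y$ and $1\le p\le n-1$), one gets isomorphisms $H^i(\E(-p-1))\cong H^i(\E(-p))$ for all $i$. Chaining these for $p=1,\dots,n-1$ and also going upward, I need a vanishing to anchor the induction. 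For $i<n$ the intermediate cohomology is controlled: Serre vanishing gives $H^i(\E(m))=0$ for $m\gg0$ and $i>0$, and combined with the isomorphisms above (which propagate in both directions once $\E_{|Y}$ has the relevant vanishing) one deduces $H^i(\E(-p))=0$ for $0<i<n$ in the required range; the case $i=0$ and $i=n$ needs separate attention. For $i=0$: $H^0(\E(-n))\hookrightarrow$ something, and $H^0(\E(-p))=0$ follows because $H^0(\E_{|Y}(-p))=0$ forces $H^0(\E(-p))=H^0(\E(-p-1))$, and $H^0(\E(-p))=0$ for $p\gg 0$ (a subsheaf of a locally free sheaf with very negative twist has no sections — more precisely $H^0(\E(-p))=0$ once $\E(-p)$ has no sections, which holds for $p$ large since $\E$ is a fixed bundle). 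For $i=n$: use Serre duality, $H^n(\E(-p))\cong H^0(\E^*\otimes \omega_X(p))^*$, and a dual argument, or dualize the restriction sequence.

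The main obstacle will be the two extreme cohomological degrees $i=0$ and $i=n$, and in particular verifying the last twist $p=n$, which is not covered directly by the Ulrich hypothesis on $Y$ (that only gives $1\le p\le n-1$). The cleanest route is: establish $H^i(\E(-p))=0$ for all $i$ and $1\le p\le n-1$ by the isomorphism-chaining argument above anchored at large positive twists (Serre vanishing for $i>0$) and large $p$ (no sections for $i=0$), using at each step the vanishing of $H^i(\E_{|Y}(-p))$; this already shows $\E$ restricted to $Y$-direction behaves well. Then for $p=n$, apply the restriction sequence once more with $p=n-1$:
\begin{equation*}
0 \to \E(-n) \to \E(-n+1) \to \E_{|Y}(-n+1) \to 0,
\end{equation*}
and read off $H^i(\E(-n))$ from $H^i(\E(-n+1))$ (just shown to vanish for all $i$) and $H^i(\E_{|Y}(-n+1))$ (vanishes for all $i$ since $1\le n-1\le n-1$ and $\E_{|Y}$ is Ulrich): the long exact sequence gives $H^i(\E(-n))\cong H^{i-1}(\E_{|Y}(-n+1))=0$ for all $i$. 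This closes the argument, and I would isolate the ``isomorphism propagation'' step as a small internal claim since it is used repeatedly. The only genuine subtlety is making sure the induction base (dimension $2$, where ``Ulrich on the hyperplane section'' means Ulrich on a curve) is either assumed known or handled by the same restriction sequence argument with $n=2$, which it is.
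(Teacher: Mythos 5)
Your overall plan coincides with the paper's: twist the restriction sequence $0 \to \E(j-1) \to \E(j) \to \E_{|Y}(j) \to 0$, chain the resulting identifications of cohomology across twists, anchor the chains with Serre vanishing/duality, and treat the extra twist $-n$ by one more application of the sequence. Several pieces are sound as written: the case $i=0$ (anchored at very negative twists, using $H^0(\E_{|Y}(-p))=0$ for all $p\ge 1$), the final step $H^i(\E(-n))\hookrightarrow H^i(\E(-n+1))$ coming from $H^{i-1}(\E_{|Y}(-n+1))=0$, and the degrees $2\le i\le n-1$ — provided you make explicit that descending from $H^i(\E(m))=0$, $m\gg 0$, uses $H^{i-1}(\E_{|Y}(j))=0$ for \emph{all} $j$, i.e. the aCM property of Ulrich bundles (Lemma \ref{ulr}(iv)), which goes beyond the defining vanishing range $-(n-1)\le j\le -1$ that you invoke.

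There is, however, a genuine gap at $i=1$, and the case $i=n$ is only gestured at. To deduce $H^1(\E(-p))=0$ by descending from $H^1(\E(m))=0$ for $m\gg 0$, each step from twist $j$ to twist $j-1$ requires the connecting map $H^0(\E_{|Y}(j))\to H^1(\E(j-1))$ to vanish, i.e. either $H^0(\E_{|Y}(j))=0$ or surjectivity of the restriction $H^0(\E(j))\to H^0(\E_{|Y}(j))$, for every $j\ge 0$ in the range. Neither is available: an Ulrich bundle is globally generated and nonzero (Lemma \ref{ulr}(iii)), so $H^0(\E_{|Y}(j))\ne 0$ for all $j\ge 0$, and the surjectivity of the restriction map is not known (it is essentially what one is trying to prove). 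So your claimed conclusion ``$H^i(\E(-p))=0$ for $0<i<n$'' cannot be obtained at $i=1$ by anchoring at positive twists. The paper instead anchors $i\in\{0,1\}$ at $j\ll 0$, where $h^i(\E(j))=h^{n-i}(\E^*(K_X-jH))=0$ by Serre duality and Serre vanishing, and propagates \emph{upward} to $j=-1$ using $H^0(\E_{|Y}(j))=0$ for $j\le -1$ and $H^1(\E_{|Y}(j))=0$ for all $j$ (aCM, valid since $\dim Y=n-1\ge 2$); it then obtains $i\in\{n-1,n\}$ for all twists $\ge -n$ by applying this to $\E'=\E^*(K_X+(n+1)H)$, whose restriction $\E_{|Y}^*(K_Y+nH_{|Y})$ is again Ulrich by Lemma \ref{ulr}(ii), and converting back via Serre duality — this Ulrich-duality input is also the missing substance behind your ``dual argument'' for $i=n$, since dualizing the restriction sequence alone does not give the needed vanishing of $H^0(\E^*(K_X+pH))$. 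With these repairs your outline closes; without them the degrees $i=1$ and $i=n$ are not covered.
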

\begin{proof}
For $j \in \Z$ consider the exact sequence
\begin{equation}
\label{sez}
0 \to \E(j-1) \to \E(j) \to \E_{|Y}(j) \to 0.
\end{equation}
If $2 \le i \le n-2$ we have that $H^{i-1}(\E_{|Y}(j))=H^i(\E_{|Y}(j))=0$ for any $j \in \Z$ by Lemma \ref{ulr}(iv). Hence \eqref{sez} gives that $h^i(\E(j-1))=h^i(\E(j))$ for any $j \in \Z$. On the other hand $h^i(\E(j))=0$ for $j \gg 0$ and it follows that $h^i(\E(j))=0$ for any $j \in \Z$ and $2 \le i \le n-2$.

Suppose now that $i \in \{0,1\}$ and $j \le -1$. We have that $H^0(\E_{|Y}(j))=0$ and, since $n-1 \ge 2$, also that $H^1(\E_{|Y}(j))=0$ by Lemma \ref{ulr}(iv). Hence \eqref{sez} gives that $h^i(\E(j-1))=h^i(\E(j))$. On the other hand, by Serre duality, $h^i(\E(j))=h^{n-i}(\E^*(K_X-jH))=0$ for $j \ll 0$ and therefore
\begin{equation}
\label{van}
h^i(\E(j))=0 \ \hbox{for} \ i \in \{0,1\} \ \hbox{and} \ j \le -1.
\end{equation}
Now let $\E'=\E^*(K_X+(n+1)H)$. Then  $\E'_{|Y}=\E_{|Y}^*(K_Y+nH_{|Y})$ is also Ulrich by Lemma \ref{ulr}(ii). Therefore \eqref{van} implies that $h^i(\E'(j))=0$ for $i \in \{0,1\}$ and $j \le -1$. By Serre duality we get that
$h^{n-i}(\E(-n-1-j))=h^i(\E^*(K_X+(n+1+j)H))=h^i(\E'(j))=0$ for $i \in \{0,1\}$ and $j \le -1$. But this is the same as $h^s(\E(l))=0$ for $s \in \{n-1,n\}$ and $l \ge -n$. 

Thus we have proved that $H^i(\E(-p))=0$ for $i \ge 0$ and $1 \le p \le n$, that is $\E$ is Ulrich.
\end{proof}

\section{Ulrich conormal bundles}

In this section we will draw some very useful consequences and facts for varieties $X \subset \P^r$ such that $N_X^*(k)$ is Ulrich. 

The first one is a reduction via hyperplane sections (for the $X_i$'s see Notation \ref{not}).

\begin{lemma}
\label{sezi}
Let $X \subset \P^{c+n}$ be a smooth variety of dimension $n$ and codimension $c \ge 1$. If $n \ge 2$ and $N_X^*(k)$ is Ulrich, then $N_{X_i/\P^{c+i}}^*(k)$ is Ulrich for all $i \in \{1,\ldots,n-1\}$. Vice versa, if $n \ge 3$ and $N_{X_i/\P^{c+i}}^*(k)$ is Ulrich for some $i \in \{2,\ldots,n-1\}$, then $N_{X_j/\P^{c+j}}^*(k)$ is Ulrich for all $j \in \{2,\ldots,n\}$ (hence in particular so is $N_X^*(k)$).
\end{lemma}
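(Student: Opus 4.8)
The plan is to reduce both directions to two facts already in hand — the behaviour of Ulrich bundles under a single hyperplane section (Lemma~\ref{ulr}(v) to go down, Lemma~\ref{ulr2} to go up, the latter needing ambient dimension $\ge 3$) — together with the compatibility of the conormal bundle with iterated general hyperplane sections. For the second ingredient I would first record: if $Y \subset \P^{c+m}$ is smooth and $H' \cong \P^{c+m-1}$ is a general hyperplane, then $Y':=Y\cap H'$ is smooth and $Y$ meets $H'$ transversally, so $N_{Y'/\P^{c+m-1}} \cong (N_{Y/\P^{c+m}})_{|Y'}$; dualizing and twisting, and using $\O_{\P^{c+m}}(1)_{|Y'}=\O_{\P^{c+m-1}}(1)_{|Y'}$ compatibly with the convention $\G(l)=\G(lH)$, this gives $N_{Y'/\P^{c+m-1}}^*(k) \cong (N_{Y/\P^{c+m}}^*(k))_{|Y'}$. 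Applying this with $Y=X_m$ and $Y'=X_{m-1}$ (the section by the general divisor $H_{n-m+1}$) yields, for every $m$ with $2 \le m \le n$,
\begin{equation}
\label{plan-iso}
N_{X_{m-1}/\P^{c+m-1}}^*(k) \cong \bigl(N_{X_m/\P^{c+m}}^*(k)\bigr)_{|X_{m-1}}.
\end{equation}
Since the $H_i$ are general, each $X_m$ is smooth (and connected once $m \ge 2$) by Bertini, so every section used below is legitimate.

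For the first assertion I would argue by descending induction on $m$ from $n$ to $1$ that $N_{X_m/\P^{c+m}}^*(k)$ is Ulrich for $(X_m,H_{|X_m})$. The case $m=n$ is the hypothesis. Given the claim for $m$, $X_{m-1}$ is a smooth hyperplane section of $X_m$, so Lemma~\ref{ulr}(v) gives that $(N_{X_m/\P^{c+m}}^*(k))_{|X_{m-1}}$ is Ulrich on $X_{m-1}$, and by \eqref{plan-iso} this bundle is exactly $N_{X_{m-1}/\P^{c+m-1}}^*(k)$. This yields the conclusion for all $m \in \{1,\ldots,n-1\}$, which is the statement.

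For the converse, suppose $N_{X_i/\P^{c+i}}^*(k)$ is Ulrich for some $i$ with $2 \le i \le n-1$. Going downwards from $i$ is the very same argument (Lemma~\ref{ulr}(v) and \eqref{plan-iso}), and gives that $N_{X_j/\P^{c+j}}^*(k)$ is Ulrich for $2 \le j \le i$. Going upwards I would induct on $m$ from $i$ to $n-1$: since $X_m$ is a smooth hyperplane section of $X_{m+1}$, \eqref{plan-iso} identifies $N_{X_m/\P^{c+m}}^*(k)$ with $(N_{X_{m+1}/\P^{c+m+1}}^*(k))_{|X_m}$, which is Ulrich by inductive hypothesis; and since $\dim X_{m+1}=m+1 \ge 3$ (because $m \ge i \ge 2$), Lemma~\ref{ulr2} lets me conclude that $N_{X_{m+1}/\P^{c+m+1}}^*(k)$ is Ulrich on $X_{m+1}$. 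This covers $i \le j \le n$ — in particular $j=n$, i.e. $N_X^*(k)$ is Ulrich — and combined with the downward range finishes all $j \in \{2,\ldots,n\}$.

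The only point requiring genuine care is \eqref{plan-iso}: one must verify that a general hyperplane meets the smooth $X_m$ transversally — so that the conormal bundle of the section really is the restriction of the conormal bundle, and not merely a term in an exact sequence involving it — and must carry the twist by $k$ correctly through the identification of $\O_{X_m}(1)_{|X_{m-1}}$ with $\O_{X_{m-1}}(1)$. Everything else is a bookkeeping of dimensions, ensuring that Lemma~\ref{ulr}(v) is available at each downward step and that the $\dim \ge 3$ hypothesis of Lemma~\ref{ulr2} holds at each upward step; the assumption $n \ge 3$ and the restriction $i \ge 2$ in the converse are precisely what make this bookkeeping go through.
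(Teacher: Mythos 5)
Your proposal is correct and follows essentially the same route as the paper: the identification $N_{X_{m-1}/\P^{c+m-1}}^* \cong (N_{X_m/\P^{c+m}}^*)_{|X_{m-1}}$ for general hyperplane sections, then Lemma~\ref{ulr}(v) to descend and Lemma~\ref{ulr2} (with the dimension $\ge 3$ check, guaranteed by $i \ge 2$) to ascend. The only difference is that you spell out the index bookkeeping and the downward range $2 \le j \le i$ explicitly, which the paper leaves implicit.
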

\begin{proof}
Recall that if $Y \subset \P^m$ is smooth and $Z$ is a smooth hyperplane section, then 
$$(N_{Y/\P^m})_{|Z} \cong N_{Z/\P^{m-1}}.$$ 
Now if $N_X^*(k)$ is Ulrich, then so are all $N_{X_i/\P^{c+i}}^*(k)$ by Lemma \ref{ulr}(v). 

Vice versa, if $N_{X_i/\P^{c+i}}^*(k)$ is Ulrich for some $i \in \{2,\ldots,n-1\}$, then $N_{X_{i+1}/\P^{c+i+1}}^*(k)$ is Ulrich by Lemma \ref{ulr2}. Repeating the argument we get that $N_{X_j/\P^{c+j}}^*(k)$ is Ulrich for all $j \in \{2,\ldots,n\}$.
\end{proof}

We now deal with $X$ degenerate in $\P^r$.

\begin{lemma}
\label{deg}
Let $X \subset \P^r$ be a smooth degenerate variety of dimension $n \ge 1$. Then $N_X^*(k)$ is Ulrich if and only if $(X,H,k)=(\P^n,\O_{\P^n}(1),1)$.
\end{lemma}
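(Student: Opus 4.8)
The plan is to analyze the case $X \subset \P^r$ degenerate, so $X$ lies in a hyperplane $\P^{r-1} \subset \P^r$; write $\ov X \subset \P^{r-1}$ for the same variety under this embedding. The key structural input is the exact sequence \eqref{proj} from Lemma \ref{s} applied to the projection $\pi\colon X \to \ov X$ with $r - m = 1$, which here is simply the inclusion of a hyperplane section normal bundle:
\begin{equation*}
0 \to N_{\ov X/\P^{r-1}}^* \to N_X^* \to \O_X(-1) \to 0.
\end{equation*}
Twisting by $k$ and using that $N_X^*(k)$ is Ulrich, hence aCM and with vanishing cohomology in the Ulrich range, I would extract cohomological constraints on $\O_X(k-1)$ and on $N_{\ov X/\P^{r-1}}^*(k)$.

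First I would handle the constraint coming from global sections and Serre duality. Since $N_X^*(k)$ is Ulrich it is globally generated by Lemma \ref{ulr}(iii), so the quotient $\O_X(k-1)$ is globally generated, forcing $k \ge 1$. Next, $H^0(N_X^*(k-p)) = 0$ for $1 \le p \le n$ (indeed $H^0(N_X^*(-1)) = 0$ always since $N_X^*(-1)$ has no sections, but more to the point the Ulrich vanishing gives $H^0(N_X^*(k-1)) = 0$ when $n \ge 1$); feeding this into the sequence above shows $H^0(N_{\ov X/\P^{r-1}}^*(k-1)) = 0$ and $H^0(\O_X(k-2))$ injects into $H^1(N_{\ov X/\P^{r-1}}^*(k-1))$. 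The cleanest route is to invoke Lemma \ref{s} directly: from $H^0(N_X^*(k-1)) = 0$ and the isomorphic projection $X \to \ov X$ (which is literally a hyperplane section, an isomorphism onto its image), Lemma \ref{s} gives $k - 1 \le s(\ov X) - 1$, i.e. $k \le s(\ov X)$, and also $k \le s(X)$. If $X$ is a proper nondegenerate subvariety of $\P^{r-1}$ that is not contained in a quadric one still has $s(\ov X) \ge 2$, but I want to push to equality.

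The decisive step is to use the full Ulrich condition rather than just global generation. Restricting to a general curve section and using Lemma \ref{ulr}(v), I reduce to understanding when a rank-$c$ bundle of the form fitting in $0 \to N_{\ov C}^*(k) \to N_C^*(k) \to \O_C(k-1) \to 0$ can be Ulrich on $C \subset \P^{c+1}$; comparing first Chern classes via Lemma \ref{ulr}(i) and degrees, together with the known classification, should force $c = 0$, i.e. $C$ is a point, which back-propagates to $X = \P^n$. Concretely: if $X \ne \P^n$ then $\ov X \subsetneq \P^{r-1}$ is a positive-codimension nondegenerate variety, $N_{\ov X/\P^{r-1}}^*$ is a genuine bundle of rank $c - 1 \ge 1$ wait — one must be careful that $\ov X$ could itself be degenerate, but replacing $\P^{r-1}$ by the span of $\ov X$ and iterating the sequence \eqref{proj}, we eventually reach the nondegenerate model $\wt X \subset \P^{\dim X + c'}$; Theorem \ref{n<2} (which applies to the nondegenerate case) then forces $\dim X = 1$ unless $c' = 0$. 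Handling the curve case $n = 1$ by hand: a degenerate curve $C \subset \P^r$ spanning a $\P^m$ with $m < r$ has $N_C^*(k) $ Ulrich iff $N_{C/\P^m}^*(k)$ is Ulrich plus $\O_C(k-1)$ Ulrich, and by Lemma \ref{ulr}(vi) the latter forces $(C, H, k-1) = (\P^1, \O(1), 0)$, hence $k = 1$ and $C = \P^1$, and then one checks this degenerate case only occurs when $m = 1$, giving $(X, H, k) = (\P^n, \O_{\P^n}(1), 1)$ in general.

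For the converse, I would simply verify that $N_{\P^n/\P^{n+1}}^* = \O_{\P^n}(-1)$, so $N_{\P^n/\P^{n+1}}^*(1) = \O_{\P^n}$, which is Ulrich for $(\P^n, \O_{\P^n}(1))$ by Lemma \ref{ulr}(vi). The main obstacle I anticipate is the bookkeeping in the degenerate-inside-degenerate situation: one must iterate \eqref{proj} down to the nondegenerate model while tracking that the successive quotients $\O_X(-1)$ impose enough cohomological vanishing that Ulrichness is inherited correctly, and then cleanly invoke Theorem \ref{n<2} together with Lemma \ref{ulr}(vi) to close the argument; the subtlety is ensuring no exotic degenerate curve slips through, which is exactly where the sharp statement $s(X) \ge k$ from Lemma \ref{s} combined with the rank/degree numerics does the work.
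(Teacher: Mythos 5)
Your proposal never states the one fact the paper's proof turns on: since $X$ lies in a hyperplane (or, after passing to its span, a linear subspace), the normal bundle sequence splits, $N_{X/\P^r} \cong N_{X/\langle X\rangle} \oplus \O_X(1)^{\oplus(r-m)}$, so $\O_X(k-1)$ is a \emph{direct summand} of $N_X^*(k)$. A direct summand of an Ulrich bundle is Ulrich (the defining vanishings are checked summand by summand), and Lemma \ref{ulr}(vi) applied to $\O_X(k-1)$ immediately gives $(X,H,k)=(\P^n,\O_{\P^n}(1),1)$; conversely $N^*_{\P^n/\P^r}(1)=\O_{\P^n}^{\oplus c}$ is Ulrich in any codimension (you only verify $c=1$). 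That is the entire paper proof, two lines, with no use of Lemma \ref{s}, curve sections, Chern classes, or Theorem \ref{n<2}.

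Because you work only with the short exact sequence $0 \to N^*_{\ov X}(k) \to N_X^*(k) \to \O_X(k-1) \to 0$ rather than the splitting, your argument has a genuine gap at its load-bearing steps: Ulrichness of the middle term of a short exact sequence does \emph{not} pass to the sub or the quotient, so neither the claim that iterating \eqref{proj} "reaches the nondegenerate model" with $N^*_{\wt X}(k)$ still Ulrich (which you need in order to invoke Theorem \ref{n<2}), nor the asserted equivalence "$N_C^*(k)$ Ulrich iff $N_{C/\P^m}^*(k)$ Ulrich plus $\O_C(k-1)$ Ulrich" in the curve case, is justified as written — and you yourself flag this bookkeeping as the unresolved subtlety. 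The intermediate step "comparing first Chern classes \dots should force $c=0$, i.e.\ $C$ is a point" is not an argument (and $c=0$ would not mean $C$ is a point). All of this is repaired at once by the splitting, which makes $\O_X(k-1)$ a summand and renders the rest of your machinery unnecessary; without it, the proof does not close.
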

\begin{proof}
If $(X,H)=(\P^n,\O_{\P^n}(1))$ then $N_X^*(1)=\O_{\P^n}^{\oplus c}$ is Ulrich. 

Vice versa assume that $N_X^*(k)$ is Ulrich. Since $X$ is degenerate, $N_X^*(k)$ has $\O_X(k-1)$ as a direct summand. Therefore also $\O_X(k-1)$ is Ulrich and Lemma \ref{ulr}(vi) gives that $(X,H,k)=(\P^n,\O_{\P^n}(1),1)$.
\end{proof}

In the sequel we will then consider only nondegenerate varieties.

We start by collecting some cohomological and numerical conditions.

\begin{lemma} {\rm (cohomological conditions)} 
\label{coh} 

Let $X \subset \P^{c+n}$ be a smooth nondegenerate variety of dimension $n \ge 1$ and codimension $c \ge 1$. If $N_X^*(k)$ is Ulrich,  we have: 
\begin{itemize}
\item[(i)] $H^n(\O_X(l))=0$ for every $l \ge k-n-1$.
\item[(ii)] If $n \ge 2$, then $X \subset \P^{c+n}$ is projectively normal. 
\item[(iii)] If $n \ge 2$ then $q(X)=0$.
\item[(iv)] $k \le \min\{s(X), s(\overline{X})\}$, where $\overline{X} \subset \P^r$ is any isomorphic projection of $X$.
\end{itemize} 
\end{lemma}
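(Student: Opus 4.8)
The plan is to derive each item from the defining vanishing $H^i(N_X^*(k-p))=0$ for $0\le i\le n$ and $1\le p\le n$, together with the standard facts about Ulrich bundles in Lemma \ref{ulr} and the Euler and conormal sequences \eqref{eul}, \eqref{nor}. For (i), I would twist the conormal sequence \eqref{nor} by $l$ and use the two short exact sequences obtained from \eqref{eul} and \eqref{nor}: from \eqref{nor} one gets $H^n({\Omega^1_{\P^r}}_{|X}(l))$ surjects onto $H^n(\Omega^1_X(l))$ with kernel governed by $H^n(N_X^*(l))$, and from \eqref{eul} one controls $H^n({\Omega^1_{\P^r}}_{|X}(l))$ via $V\otimes H^n(\O_X(l-1))$ and $H^{n-1}(\O_X(l))$. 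The cleanest route is probably: since $N_X^*(k)$ is Ulrich it is aCM (Lemma \ref{ulr}(iv)) and satisfies $H^n(N_X^*(k-p))=0$ for $1\le p\le n$; chasing \eqref{nor}(l) for $l\ge k-n-1$ and using that $H^n(\Omega^1_X(l))=H^0(T_X(K_X-lH))^\vee$ vanishes for $l$ large while $H^n(N_X^*(l))$ is controlled by the Ulrich/aCM vanishing, one propagates the vanishing of $H^n(\O_X(l))$ downward from $l\gg 0$ to $l=k-n-1$. Item (iv) is essentially immediate from Lemma \ref{s}: an Ulrich bundle $\E$ has $H^0(\E(-1))=0$, so $H^0(N_X^*(k-1))=0$, and Lemma \ref{s} applied with $l=k-1$ gives $k-1\le\min\{s(X)-1,s(\overline X)-1\}$, i.e. $k\le\min\{s(X),s(\overline X)\}$.

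For (ii) and (iii), assume $n\ge 2$. Projective normality means $H^1(\I_{X/\P^r}(j))=0$ for all $j$, equivalently the restriction maps $H^0(\O_{\P^r}(j))\to H^0(\O_X(j))$ are surjective for all $j\ge 1$; since $X$ is nondegenerate this is automatic for $j\le 1$, and the interesting range is $j\ge 2$. I would compare the Euler sequence on $\P^r$ restricted to $X$ with \eqref{eul}: the cokernel data for the surjectivity of $\Sym^j V\to H^0(\O_X(j))$ is measured by $H^1$ of a Koszul-type complex whose terms involve $H^1({\Omega^1_{\P^r}}_{|X}(j))$ and higher, and \eqref{nor} together with the aCM-ness of $N_X^*(k)$ (Lemma \ref{ulr}(iv)) plus known vanishing of $H^1(\Omega^1_X(j))$ in suitable range will force these to vanish. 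Concretely: from \eqref{nor}(j) we get $H^1(N_X^*(j))\to H^1({\Omega^1_{\P^r}}_{|X}(j))\to H^1(\Omega^1_X(j))$, and $H^1(N_X^*(j))=0$ for all $j$ in the relevant range by aCM-ness of $N_X^*(k)$ (valid since $\operatorname{rk} N_X^*=c$ and $n\ge 2$ makes $0<1<n$), reducing projective normality of $X$ to a statement about $\Omega^1_X$ on a variety of dimension $\ge 2$ whose hyperplane sections inherit the hypothesis via Lemma \ref{sezi}. For (iii), once $n\ge 2$ one has $H^1(\O_X)$; I would feed $H^1(\O_X(-1))$ (which vanishes since an Ulrich bundle, restricted appropriately, forces it, or directly from the Ulrich vanishing applied through \eqref{eul} and \eqref{nor}) and climb: the sequence \eqref{nor} twisted by $0$ gives $H^0(\Omega^1_X)\hookrightarrow H^1(N_X^*)$, and $H^1(N_X^*)=0$ by aCM-ness, so $h^0(\Omega^1_X)=0$, hence $q(X)=h^0(\Omega^1_X)=0$ for $X$ smooth projective.

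The main obstacle I anticipate is (ii): turning the aCM vanishing $H^1(N_X^*(j))=0$ into genuine projective normality requires also handling $H^1$ and $H^2$ of ${\Omega^1_{\P^r}}_{|X}$ and tracking the Euler sequence carefully, since projective normality of $X\subset\P^r$ is not formally equivalent to a single $H^1$ vanishing of $N_X^*$. I would circumvent this by induction on $n$ using Lemma \ref{sezi}: reduce to the surface case $S=X_2$, where $N_{S/\P^{c+2}}^*(k)$ is Ulrich, and there use that projective normality of a surface can be checked via its general curve section plus the vanishing $H^1(\O_S(j))=0$ (which comes from (i) applied on $S$, or its analogue) together with the cohomology of \eqref{eul}, \eqref{nor} on $S$. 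The bookkeeping of which $H^i(\O_X(l))$ vanish — cleanly packaged in (i) — is what makes (ii) and (iii) go through, so I would prove (i) first, then (iv), then bootstrap (ii) and (iii) from (i) and the aCM property of $N_X^*(k)$.
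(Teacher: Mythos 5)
Your items (iii) and (iv) are fine and essentially coincide with the paper's proofs: for (iv) one uses $H^0(N_X^*(k-1))=0$ and Lemma \ref{s} with $l=k-1$, and for (iii) the map $H^0(\Omega^1_X)\to H^1(N_X^*)$ is injective because $H^0({\Omega^1_{\P^{c+n}}}_{|X})=0$ (immediate from \eqref{eul}; your detour through $H^1(\O_X(-1))$ is unnecessary), while $H^1(N_X^*)=0$ by aCM-ness when $n\ge 2$. The substantive gaps are in (i) and (ii). For (i), your downward induction cannot reach the only case that matters, $l=k-n-1$ (the cases $l>k-n-1$ are formal, since by Serre duality $H^n(\O_X(l))^\vee=H^0(K_X-lH)$ and effectivity of $K_X-lH$ only improves as $l$ decreases). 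To descend from $H^n(\O_X(l))=0$ to $H^n(\O_X(l-1))=0$ through \eqref{eul} you need $H^n({\Omega^1_{\P^{c+n}}}_{|X}(l))=0$, hence by \eqref{nor} both $H^n(N_X^*(l))=0$ and $H^n(\Omega^1_X(l))\cong H^0(T_X(K_X-lH))^\vee=0$. The Ulrich hypothesis gives the first vanishing only for $k-n\le l\le k-1$ (aCM-ness says nothing about $H^n$), and the second is known only for $l\gg 0$; at the small twist $l=k-n$, exactly where you must land, there is no reason for $H^0(T_X(K_X-(k-n)H))$ to vanish and you give none. The paper's argument is different and is the idea you are missing: if $H^n(\O_X(k-n-1))\ne 0$, then $K_X+(n+1-k)H$ is effective, so there is an injection $N_X(-1)\hookrightarrow N_X(K_X+(n-k)H)$; since $N_X(-1)$ is globally generated this forces $h^0(N_X(K_X+(n-k)H))>0$, contradicting the fact that $N_X(K_X+(n+1-k)H)$ is Ulrich (Lemma \ref{ulr}(ii)) and hence has no sections after twisting by $-H$.

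For (ii), neither of your two routes works as described. Running the Euler sequence directly, the cokernel of $V\otimes H^0(\O_X(l))\to H^0(\O_X(l+1))$ injects into $H^1({\Omega^1_{\P^{c+n}}}_{|X}(l+1))$, whose vanishing via \eqref{nor} requires the uncontrolled group $H^1(\Omega^1_X(l+1))$ --- the same kind of obstruction as in (i). Your fallback, reducing to the surface section and lifting projective normality from the general curve section, is not available: such a lifting needs the curve section $C\subset\P^{c+1}$ itself to be projectively normal (plus vanishings of $H^1(\O_S(j))$, which is not what (i) provides, (i) being an $H^n$ statement), and the Ulrich hypothesis gives nothing of the sort on a curve, since aCM-ness of $N_C^*(k)$ is vacuous for $n=1$ and one controls $H^1(N_C^*(j))$ only at the Ulrich twists. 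Note also that surjectivity in degree $1$ is not automatic from nondegeneracy (isomorphic projections are nondegenerate but not linearly normal); in the paper it comes out of the induction. The missing ingredient is Ein's principal parts construction: tensoring the diagram of \cite[Proof of Thm.~2.4]{ei} by $\O_X(l)$ and using $P^1(\O_X(1))\otimes\O_X(l)\cong P^1(\O_X(l+1))$, the vanishing $H^1(N_X^*(l+1))=0$ (aCM, $n\ge 2$) makes $V\otimes H^0(\O_X(l))\to H^0(P^1(\O_X(l+1)))$ surjective, and \cite[Prop.~2.3]{ei} gives surjectivity of $H^0(P^1(\O_X(l+1)))\to H^0(\O_X(l+1))$; induction on $l\ge 0$ then yields projective normality. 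Replacing ${\Omega^1_{\P^{c+n}}}_{|X}$ by the principal parts bundle is precisely what bypasses $H^1(\Omega^1_X(l+1))$, and without it your sketch of (ii) does not close.
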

\begin{proof} 
By hypothesis $N_X^*(k)$ is Ulrich, hence it is aCM by Lemma \ref{ulr}(iv). 

To see (i) we just need to prove that $H^n(\O_X(k-n-1))=0$. Assume that $H^n(\O_X(k-n-1)) \ne 0$, that is, by Serre duality, $H^0(K_X+(n+1-k)H) \ne 0$. Then we have an inclusion 
$$N_X(-1) \hookrightarrow N_X(K_X+(n-k)H)$$ 
and, since  $N_X(-1)$ is globally generated, we get that $h^0(N_X(K_X+(n-k)H)) \ne 0$. On the other hand, $N_X(K_X+(n+1-k)H)$ is Ulrich by Lemma \ref{ulr}(ii), hence $h^0(N_X(K_X+(n-k)H)) = 0$. This contradiction proves (i). To see (ii), let $V=H^0(\O_{\P^{c+n}}(1))$ and let $P^1(\O_X(1))$ be the sheaf of principal parts and consider, as in \cite[Proof of Thm.~2.4]{ei}, the following commutative diagram
$$\xymatrix{& 0 \ar[d] & 0 \ar[d] & & \\ & N_X^*(1) \ar[r]^{\cong} \ar[d] & N_X^*(1) \ar[d] & & \\ 0 \ar[r] & \Omega^1_{\P^{c+n}}(1)_{|X} \ar[r]  \ar[d] & V \otimes \O_X \ar[r]  \ar[d] & \O_X(1) \ar[r]  \ar[d]^{\cong} & 0 \\ 0 \ar[r] & \Omega^1_X(1) \ar[r] \ar[d] & P^1(\O_X(1)) \ar[r] \ar[d] & \O_X(1)  \ar[r] & 0 \\ & 0 & 0 & & }.$$
Pick an integer $l \ge 0$. Tensoring the above diagram by $\O_X(l)$ and observing that 
$$P^1(\O_X(1)) \otimes \O_X(l) \cong P^1(\O_X(l+1))$$
by \cite[(2.2)]{ei}, we get the commutative diagram
\begin{equation}
\label{diag}
\xymatrix{& V \otimes H^0(\O_X(l)) \ar[d]^{f_l} \ar[dr]^{h_l} & \\ & H^0(P^1(\O_X(l+1))) \ar[r]^{\ \ \ g_l} \ar[d] & H^0(\O_X(l+1)) & \\ & H^1(N_X^*(l+1)).}
\end{equation}
Now we have that $H^1(N_X^*(l+1))=0$ since $N_X^*$ is aCM and $n \ge 2$. Hence $f_l$ is surjective for every $l \ge 0$ and so is $g_l$ by \cite[Prop.~2.3]{ei}. It follows by \eqref{diag} that $h_l$ is surjective for every $l \ge 0$.  
Moreover the commutative diagram
\begin{equation}
\label{diag2}
\xymatrix{V \otimes H^0(\O_{\P^{c+n}}(l)) \ar[r] \ar[d]^{\Id_V \otimes r_l} & H^0(\O_{\P^{c+n}}(l+1)) \ar[d]^{r_{l+1}} \\ V \otimes H^0(\O_X(l)) \ar@{->>}[r]^{h_l} & H^0(\O_X(l+1))}
\end{equation}
shows by induction that $r_l : H^0(\O_{\P^{c+n}}(l)) \to H^0(\O_X(l))$ is surjective for every $l \ge 0$, so that $X \subset \P^r$ is projectively normal, that is (ii). 

To see (iii) observe that, \eqref{eul} gives an exact sequence
\begin{equation}
\label{eul2}
\xymatrix{0 \ar[r] & H^0(\Omega^1_{\P^{c+n}}(1)_{|X}) \ar[r] & V \ar[r]^{\hskip -1cm f} & H^0(\O_X(1))}
\end{equation}
and $f$ is injective since $X$ is nondegenerate, hence $H^0(\Omega^1_{\P^{c+n}}(1)_{|X})=0$. Now $N_X^*(k)$ is aCM and therefore $H^1(N_X^*(1))=0$. Then the exact sequence
\begin{equation}
\label{nor2}
0 \to N_X^*(1) \to \Omega^1_{\P^{c+n}}(1)_{|X} \to \Omega^1_X(1) \to 0
\end{equation}
shows that $H^0(\Omega^1_X(1))=0$, hence, in particular $q(X)=h^0(\Omega^1_X)=0$. This proves (iii). Finally (iv) follows by Lemma \ref{s} since, $N_X^*(k)$ being Ulrich, we have that $H^0(N_X^*(k-1))=0$.
\end{proof}

\begin{lemma} {\rm (numerical conditions)} 
\label{num}

Let $X \subset \P^{c+n}$ be a smooth nondegenerate variety of dimension $n \ge 1$ and codimension $c \ge 1$ and degree $d$. If $N_X^*(k)$ is Ulrich, we have:
\begin{itemize}
\item[(i)] $[(k-2)c-2]d=(c+2)(g-1)$.
\item[(ii)] $c \ge 2$. 
\item[(iii)] $k \ge 3$.
\end{itemize}
\end{lemma}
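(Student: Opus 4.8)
The plan is to read off (i) from the Chern-class identity satisfied by any Ulrich bundle (Lemma~\ref{ulr}(i)), and then to extract (ii) and (iii) from (i) together with a couple of elementary facts (nondegeneracy forces $d\ge c+1$ on the general curve section, and $g(C)\ge 0$).

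For (i): the bundle $N_X^*(k)$ has rank $c$, so Lemma~\ref{ulr}(i) reads
$$c_1(N_X^*(k))\,H^{n-1}=\frac{c}{2}[K_X+(n+1)H]H^{n-1}.$$
From the exact sequence $0\to T_X\to {T_{\P^{c+n}}}_{|X}\to N_X\to 0$ one gets $c_1(N_X)=(c+n+1)H+K_X$, hence $c_1(N_X^*(k))=(ck-c-n-1)H-K_X$. Substituting this on the left, and replacing $K_XH^{n-1}$ by $2(g-1)-(n-1)d$ (which is merely the definition of the sectional genus), both sides become expressions in $d$ and $g-1$; after multiplying out and collecting terms this is precisely $[(k-2)c-2]d=(c+2)(g-1)$. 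This is a short computation once the Chern class of $N_X$ is in hand.

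For (ii): suppose $c=1$, so that $X\subset\P^{n+1}$ is a hypersurface of some degree $d$. Then $N_X=\O_X(d)$, so $N_X^*(k)=\O_X(k-d)$ is a line bundle of the form $\O_X(l)$; being Ulrich, Lemma~\ref{ulr}(vi) forces $(X,H,k-d)=(\P^n,\O_{\P^n}(1),0)$, and in particular $d=H^n=1$. But a degree-$1$ hypersurface of $\P^{n+1}$ is a hyperplane, contradicting the nondegeneracy assumption. Hence $c\ge 2$.

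For (iii): suppose $k\le 2$. Then $(k-2)c-2\le -2<0$, so (i) together with $d>0$ and $c+2>0$ gives $g-1<0$; since $g=g(C)\ge 0$, we must have $g=0$. The general curve section $C=X_1\subset\P^{c+1}$ is then a smooth nondegenerate rational curve of degree $d$, whence $d\ge c+1$. Putting $g=0$ into (i) yields $[2-(k-2)c]d=c+2$, and the bracket is $\ge 2$ because $k\le 2$; therefore $2d\le c+2\le d+1$, so $d\le 1$, contradicting $d\ge c+1\ge 2$. Thus $k\ge 3$. The whole argument is elementary; the only places calling for care are the twist and sign bookkeeping in (i) (relating $K_XH^{n-1}$ to the sectional genus) and, in (iii), invoking the classical degree bound $d\ge c+1$ for a nondegenerate curve in $\P^{c+1}$.
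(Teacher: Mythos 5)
Your proposal is correct, and for parts (i) and (ii) it coincides with the paper's argument: (i) is the same Chern-class computation via Lemma~\ref{ulr}(i) with $c_1(N_X)=K_X+(c+n+1)H$ and the sectional-genus relation $K_XH^{n-1}=2(g-1)-(n-1)d$, and (ii) is the same reduction of the hypersurface case to Lemma~\ref{ulr}(vi). The only genuine divergence is in (iii): the paper first proves $k\ge 2$ by a sheaf-theoretic step, namely that global generation (Lemma~\ref{ulr}(iii)) gives $h^0({\Omega^1_{\P^{c+n}}}_{|X}(k))\ge h^0(N_X^*(k))>0$, while nondegeneracy and the Euler sequence force $H^0({\Omega^1_{\P^{c+n}}}_{|X}(1))=0$; it then excludes $k=2$ using (i) and $d\ge c+1$. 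You instead rule out all $k\le 2$ at once purely numerically: $(k-2)c-2<0$ and (i) force $g=0$, and then $[2-(k-2)c]d=c+2$ together with $2-(k-2)c\ge 2$ and $d\ge c+1\ge 2$ gives the contradiction $d\le 1$. This is a slightly more elementary and uniform route, trading the cohomological step for the observations $g\ge 0$ and the classical degree bound for nondegenerate varieties (which the paper also invokes); both arguments are complete, and yours avoids any appeal to global generation in (iii).
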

\begin{proof}
By hypothesis $N_X^*(k)$ is Ulrich. By \eqref{eul} and \eqref{nor} we see that $c_1(N_X^*(k))=-K_X-(c+n+1-kc)H$. Hence Lemma \ref{ulr}(i) implies
$$-(K_X+(c+n+1-kc)H)H^{n-1} = \frac{c}{2}\left(K_X H^{n-1} + (n+1)d\right)$$
and this gives $K_XH^{n-1}=(2k-n-3-\frac{4(k-1)}{c+2})d$. But also $K_X H^{n-1} = 2(g-1)-(n-1)d$ and we get (i). As for (ii), if $c=1$ then $N_X^*(k) = \O_X(k-d)$ and Lemma \ref{ulr}(vi) gives that $X \subset \P^r$ is a linear space, a contradiction. This proves (ii). To see (iii), since $X$ is nondegenerate, we get from \eqref{eul} that $H^0({\Omega^1_{\P^{c+n}}}_{|X}(1))=0$. Now \eqref{nor} gives that $h^0({\Omega^1_{\P^{c+n}}}_{|X}(k)) \ge h^0(N_X^*(k))>0$ by Lemma \ref{ulr}(iii). Hence $k \ge 2$. But if $k=2$ then (i) gives that $g=0$ and $d=\frac{c+2}{2}$. As it is well known, $d \ge c+1$, giving a contradiction. Thus (iii) holds. 
\end{proof}

\section{Properties of the surface section}

We deduce here some very useful properties of the surface section of some $X \subset \P^r$ such that $N_X^*(k)$ is Ulrich. We assume that ${\rm char}(F)=0$.

\begin{lemma} 
\label{sur}

Let $X \subset \P^{c+n}$ be a smooth nondegenerate variety of dimension $n \ge 2$ and codimension $c \ge 1$. If $N_X^*(k)$ is Ulrich, the following inequality holds for the surface section $S$:
\smallskip

$\chi(\O_S) \ge \frac{d}{2(c+2)(c+3)(3c+4)(c+12)}[(3c^4+43c^3+86c^2+24c)k^2-(15c^4+229c^3+624c^2+432c)k+$ 

$\hskip 6cm +18c^4+296c^3+1070c^2+1368c+576]$.
\end{lemma}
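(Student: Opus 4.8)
\emph{Reduction to a surface.} If $n=2$ then $S=X$ and $N_S^*(k)=N_X^*(k)$ is Ulrich by hypothesis; if $n\ge 3$, then $2\in\{1,\dots,n-1\}$ and Lemma~\ref{sezi} (equivalently Lemma~\ref{ulr}(v)) shows that $N_{S/\P^{c+2}}^*(k)$ is Ulrich on the smooth surface $S\subset\P^{c+2}$ of degree $d$. So I may assume $n=2$, $X=S\subset\P^{c+2}$, and set $\E:=N_S^*(k)$, an Ulrich bundle of rank $c$ for $(S,H)$. The strategy is to relate the surface invariants $\chi(\O_S)$, $K_S^2$, $c_2(S)$, $K_S\cdot H$, $d=H^2$ through Riemann--Roch for $\E$, Noether's formula, and Bogomolov's inequality for $\E$.

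\emph{Chern classes and Riemann--Roch.} From \eqref{eul} and \eqref{nor} on $S$ (i.e. $c(N_S)=(1+H)^{c+3}c(T_S)^{-1}$, truncated in degrees $\le 2$) one gets $c_1(N_S)=K_S+(c+3)H$ and $c_2(N_S)=\binom{c+3}{2}d+(c+3)(K_S\cdot H)+K_S^2-c_2(S)$, hence, after twisting by $\O(kH)$,
$$c_1(\E)=-K_S+(ck-c-3)H,\qquad c_2(\E)=c_2(N_S)-(c-1)k\big((c+3)d+K_S\cdot H\big)+\binom{c}{2}k^2d.$$
Because $\E$ is Ulrich of rank $c$ on a surface of degree $d$, its Hilbert polynomial is $\tfrac{cd}{2}(m+1)(m+2)$, so $\chi(\E)=cd$; equating this with Riemann--Roch $\chi(\E)=\tfrac12 c_1(\E)(c_1(\E)-K_S)-c_2(\E)+c\,\chi(\O_S)$, the $K_S^2$--terms cancel and one is left with a linear relation among $\chi(\O_S)$, $c_2(S)$, $K_S\cdot H$ and $d$. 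Substituting Noether's formula $12\chi(\O_S)=K_S^2+c_2(S)$ and, via adjunction on $S$ together with Lemma~\ref{num}(i), $K_S\cdot H=2(g-1)-d=\tfrac{(2k-5)c-6}{c+2}\,d$, this relation takes the form
\begin{equation}\label{pfeq}
(c+12)\,\chi(\O_S)=K_S^2+\frac{\alpha(c,k)}{2(c+2)}\,d
\end{equation}
for an explicit polynomial $\alpha$.

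\emph{Bogomolov's inequality and conclusion.} It is well known that Ulrich bundles are semistable in the sense of Gieseker, hence slope-semistable with respect to $H$; therefore $\E=N_S^*(k)$ satisfies Bogomolov's inequality $2c\,c_2(\E)-(c-1)c_1(\E)^2\ge 0$. Expanding the left-hand side with the Chern classes found above and substituting once more $c_2(S)=12\chi(\O_S)-K_S^2$ and the value of $K_S\cdot H$, I obtain
$$(3c+1)K_S^2-24c\,\chi(\O_S)+\frac{\beta(c,k)}{2(c+2)}\,d\ \ge\ 0$$
with $\beta$ an explicit polynomial. Eliminating $K_S^2$ between this inequality and \eqref{pfeq}, and using the identity $(3c+1)(c+12)-24c=(3c+4)(c+3)$, gives
$$(3c+4)(c+3)\,\chi(\O_S)\ \ge\ \frac{(3c+1)\alpha(c,k)-\beta(c,k)}{2(c+2)}\,d,$$
which is equivalent to the asserted bound.

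\emph{Main difficulty.} Conceptually the only non-mechanical point is to recognize that slope-semistability of the Ulrich bundle $N_S^*(k)$ is what unlocks Bogomolov's inequality, the single source of the estimate; the remainder is a long but entirely routine computation with Chern classes and Riemann--Roch, made lighter by the lucky cancellation of $K_S^2$ in the Riemann--Roch identity, which ties $\chi(\O_S)$ to $K_S^2$ (modulo Noether) by the single relation \eqref{pfeq}. Carrying the explicit coefficient polynomials $\alpha,\beta$ through all the substitutions is the bulk of the work.
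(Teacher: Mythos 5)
Your proposal is correct and follows essentially the same route as the paper: reduce to the surface section via Lemma \ref{sezi}, derive the linear Chern-class/invariant relation forced by the Ulrich condition (your Riemann--Roch computation with $\chi(\E)=cd$ is exactly the paper's appeal to \cite[(2.2)]{c}, and your \eqref{pfeq} is the paper's relation \eqref{che} rewritten via Noether), then use semistability of Ulrich bundles plus Bogomolov's inequality and eliminate $K_S^2$. The only differences are cosmetic: the paper cites \cite[Thm.~2.9]{ch} for semistability and applies Bogomolov to $N_{S/\P^{c+2}}^*$ rather than to $N_{S/\P^{c+2}}^*(k)$ (equivalent, since the discriminant is twist-invariant), and it carries out explicitly the coefficient computations you leave as $\alpha,\beta$.
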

\begin{proof}
Note that $N_{S/\P^{c+2}}^*(k)$ is Ulrich by Lemma \ref{sezi}. Therefore, Lemma \ref{coh}(iii) implies that
\begin{equation}
\label{irreg}
q(S)=0.
\end{equation}
Next, computing the Chern classes of $N_{S/\P^{c+2}}^*(k)$ and applying \cite[(2.2)]{c}, we get
\begin{equation}
\label{che}
cK_S^2+(c+12)c_2(S)=\frac{6d}{c+2}[c(c+2)k^2-c(5c+12)k+6c^2+20c+12].
\end{equation}
Now note that $N_{S/\P^{c+2}}^*(k)$ is semistable by \cite[Thm.~2.9]{ch}, hence so is $N_{S/\P^{c+2}}^*$ and since $\rk(N_{S/\P^{c+2}}^*)=c$, the Bogomolov inequality is
\begin{equation}
\label{bogo}
d(c+3)(4k-7-\frac{8(k-1)}{c+2})+(c+1)K_S^2 \ge (2c)c_2(S).
\end{equation} 
Then \eqref{che} and \eqref{bogo} give
\begin{equation}
\label{k2}
K_S^2 \ge \frac{d}{(c+2)(c+3)(3c+4)}[(12c^3+24c^2)k^2-(64c^3+204c^2+144c)k+79c^3+351c^2+486c+216].
\end{equation}
Finally the inequality on $\chi(\O_S)$ in the statement follows by \eqref{che}, \eqref{k2} and Noether's formula. 
\end{proof}

\section{Proofs of main theorems}

We start by proving Theorem \ref{sh}.

\renewcommand{\proofname}{Proof of Theorem \ref{sh}}
\begin{proof}
Since $N_C^*(k)$ is Ulrich, it follows that $c \ge 2$ by Lemma \ref{num}(ii) and $k \le s(C)$ by Lemma \ref{coh}(iv). Also Lemma \ref{coh}(i) gives $H^1(\O_C(k-2))=0$. Therefore we have that $H^0(\I_{C/\P^{c+1}}(k-1))=H^1(\O_C(k-1))=0$ and the exact sequence 
$$0 \to \I_{C/\P^{c+1}}(k-1) \to \O_{\P^{c+1}}(k-1) \to \O_C(k-1) \to 0$$
together with Riemann-Roch, shows that
\begin{equation}
\label{ine}
\binom{k+c}{c+1} = h^0(\O_{\P^{c+1}}(k-1)) \le h^0(\O_C(k-1))=d(k-1)-g+1.
\end{equation}
Also, $g-1=\frac{(k-2)c-2}{c+2}d$ by Lemma \ref{num}(i) and replacing in \eqref{ine} we get \eqref{bd}. 

Finally, sharpness for $c=2$ and $k \equiv 1, 3 \ ({\rm mod} \ 6)$ follows by \cite[Examples, p.~88]{be}, see Example \ref{sottoc}.
\end{proof}
\renewcommand{\proofname}{Proof}

Next, we prove Theorem \ref{n<2}.

\renewcommand{\proofname}{Proof of Theorem \ref{n<2}}
\begin{proof}
Suppose that $n \ge 2$. In order to simplify the calculations we set
$$A=(3c^4+43c^3+86c^2+24c)k^2-(15c^4+229c^3+624c^2+432c)k+18c^4+296c^3+1070c^2+1368c+576$$
so that it follows by Lemma \ref{sur} that
\begin{equation}
\label{chi}
\chi(\O_S) \ge \frac{dA}{2(c+2)(c+3)(3c+4)(c+12)}.
\end{equation}
Now Lemma \ref{sezi} implies that $N_{S/\P^{c+2}}^*(k)$ is Ulrich. Hence $k \le s(S)$ by Lemma \ref{coh}(iv), $H^2(\O_S(k-2))=0$  by Lemma \ref{coh}(i) and $S \subset \P^{c+2}$ is projectively normal by Lemma \ref{coh}(ii). Therefore, we have that 
$$H^0(\I_{S/\P^{c+2}}(k-2))=H^1(\I_{S/\P^{c+2}}(k-2))=H^2(\O_S(k-2))=0$$
and the exact sequence 
$$0 \to \I_{S/\P^{c+2}}(k-2) \to \O_{\P^{c+2}}(k-2) \to \O_S(k-2) \to 0$$
together with Riemann-Roch and Lemma \ref{num}(i), shows that
\begin{equation}
\label{in}
\begin{aligned}
\binom{k+c}{c+2} = h^0(\O_{\P^{c+2}}(k-2)) = h^0(\O_S(k-2))=\chi(\O_S(k-2))+h^1(\O_S(k-2)) \ge \\
\ge \chi(\O_S)+\frac{d(k-2)}{2}\big(3-k+\frac{4(k-1)}{c+2}\big).
\end{aligned}
\end{equation}
Setting
$$B=A+(k-2)(c+3)(3c+4)(c+12)[(3-k)(c+2)+4k-4]=2(c+12)(k-1)(8ck+12k+5c^2+7c)$$
we see, using \eqref{chi}, that \eqref{in} implies
\begin{equation}
\label{in2}
\binom{k+c}{c+2} \ge \frac{dB}{2(c+2)(c+3)(3c+4)(c+12)}=\frac{d(k-1)(8ck+12k+5c^2+7c)}{(c+2)(c+3)(3c+4)}.
\end{equation}
Since $k \ge 3$ by Lemma \ref{num}(iii), we see that \eqref{in2} gives 
$$d \le \frac{(c+2)(c+3)(3c+4)}{(k-1)(8ck+12k+5c^2+7c)}\binom{k+c}{c+2}.$$ 
Since $N_X^*(k)$ is Ulrich, it follows by Lemma \ref{sezi} that $N_{C/\P^{c+1}}^*(k)$ is Ulrich, hence, using Theorem \ref{sh}, we find that
$$\frac{c+2}{2k+c}\binom{k+c}{c+1} \le d \le \frac{(c+2)(c+3)(3c+4)}{(k-1)(8ck+12k+5c^2+7c)}\binom{k+c}{c+2}$$
that is equivalent to $2c(c+1)(c+k+1) \le 0$, a contradiction.
\end{proof}
\renewcommand{\proofname}{Proof}

\section{Curves}

In this section we construct some examples of curves $C \subset \P^{c+1}$ such that $N_C^*(k)$ is Ulrich. 

First, we give a reinterpretation of two known cases.

\begin{example} 
\label{elli}
For every integer $d \ge 5$ there exists a smooth elliptic curve $C \subset \P^3$ such that $N_C^*(3)$ is Ulrich. 

In fact, it follows by \cite[Prop.~\S 8, page 278]{el} and \cite[Thm.~2(b)]{eh} that for every integer $d \ge 5$ there exists a smooth elliptic curve $C \subset \P^3$ such that $H^0(N_C(-2))=0$. Since $\chi(N_C(-2))=0$ we get that $N_C(-2)$ is Ulrich and then also $N_C^*(3)$ is Ulrich by Lemma \ref{ulr}(ii).
\end{example}

\begin{example}
\label{sottoc}
There are many subcanonical curves $X \subset \P^3$ with $H^0(N_X(-2))=0$ (see for example \cite[Examples, p.~88]{be}), hence with $N_X(-1)$ Ulrich. Therefore, since $K_X=eH$ for some $e \in \Z$, we also have by Lemma \ref{ulr}(ii), that $N_X^*(e+3)$ is Ulrich. 

We now describe the curves in \cite[Examples, p.~88]{be}. Let $h \in \Z$ such that $h \ge 1$, let $c_2=h(3h+2)$ and let $t=3h+1$ (resp. $h \ge 0, c_2=3h^2+4h+1$ and $t=3h+2$). Then in loc. cit, there are examples of smooth curves $X \subset \P^3$ with $N_X^*(e+3)$ Ulrich, $d=t^2+c_2$ and $g=(t-2)d+1$. We have $e=\frac{2g-2}{d}=2t-4$, hence $N_X^*(k)$ is Ulrich with $k=e+3=2t-1$. In the first case, $t=3h+1$, we have $h=\frac{k-1}{6}, k \equiv 1 ({\rm mod} \ 6)$ and $d =\frac{1}{3}(k^2+2k)$. In the second case, $t=3h+2$, we have $h=\frac{k-3}{6}, k \equiv 3 ({\rm mod} \ 6)$ and again $d =\frac{1}{3}(k^2+2k)$. In particular they have unbounded $k$.
\end{example}

The examples above are all subcanonical curves. In order to construct non-subcanonical ones, we will proceed below by degeneration applying the techniques of interpolation as in \cite{aly, lv}. For the reader's convenience, we recall some definitions and notation about elementary modifications of vector bundles (see \cite{aly, lv}).

\begin{defi}
Let $Z$ be a scheme, let $\E$ be a vector bundle on $Z$, let $D$ be an effective Cartier divisor on $Z$ and let $\F$ be a subbundle of $\E_{|D}$. Then
$$\E \hskip -.1cm \xymatrix{[D \ar[r(0.6)]^{\hskip .3cm -} & \hskip -.4cm \F]} = \Ker\{\E \to \E_{|D}/\F\}, \ \ \E \hskip -.1cm \xymatrix{[D \ar[r(0.6)] & \hskip -.4cm \F]} \hskip -.2cm = \E \hskip -.1cm \xymatrix{[D \ar[r(0.6)]^{\hskip .3cm -} & \hskip -.4cm \F]}, \ \ \E \hskip -.1cm \xymatrix{[D \ar[r(0.6)]^{\hskip .3cm +} & \hskip -.4cm \F]} \hskip -.2cm = \E \hskip -.1cm\xymatrix{[D \ar[r(0.6)]^{\hskip .3cm -} & \hskip -.4cm \F]} \hskip -.1cm (D).$$
\end{defi}

\begin{defi}
Let $X$ be a smooth curve, let $f: X \to \P^r$ be an unramified morphism and let $N_f = \Ker\{f^*\Omega_{\P^r} \to \Omega_X\}^\vee$ be the normal bundle of the map $f$. Given a subspace $\Lambda \subset \P^r$ of dimension $\lambda$, consider the projection $\pi_{\Lambda} : \P^r \dashrightarrow \P^{r-\lambda-1}$. Let $U$ be the open subset of $X \setminus X \cap \Lambda$ where $\pi_{\Lambda} \circ f$ is unramified. Then $N_{f \to \Lambda}$ is be the unique subbundle of $N_f$ whose restriction to $U$ is $\Ker \{(N_f)_{|U} \to (N_{\pi_{\Lambda} \circ f})_{|U}\}$. Moreover, 
$$N_f \hskip -.1cm \xymatrix{[D \ar[r(0.6)]^{\hskip .3cm +} & \hskip -.4cm \Lambda]} \hskip -.2cm := N_f \hskip -.1cm \xymatrix{[D \ar[r(0.5)]^{\hskip .3cm +} & \hskip -.2cm N_{f \to \Lambda}]} \ \hbox{and} \ N_f \hskip -.1cm \xymatrix{[D \ar[r(0.6)] & \hskip -.4cm \Lambda]} \hskip -.2cm := N_f \hskip -.1cm \xymatrix{[D \ar[r(0.5)]^{\hskip .3cm -} & \hskip -.3cm N_{f \to \Lambda}]}.$$ 
When $f$ is an embedding, we set $N_X \hskip -.1cm \xymatrix{[D \ar[r(0.6)] & \hskip -.4cm \Lambda]} \hskip -.1cm =N_f \hskip -.1cm \xymatrix{[D \ar[r(0.6)] & \hskip -.4cm \Lambda]}$. 
\end{defi}

Next, we prove Theorem \ref{nuovo2}. 

\renewcommand{\proofname}{Proof of Theorem \ref{nuovo2}(i)}
\begin{proof}
From $d=2g-2$ we deduce that $g \ge 2$, hence, since $X$ is nonspecial, $g-1=h^0(\O_C(1)) \ge 3$. Thus $g \ge 4$ and if equality holds, then $C$ is a plane curve of degree $6$, a contradiction. Therefore $g \ge 5$.

Let $E \subset \P^3$ be a general elliptic curve of degree $g-1$ and let $L_i$ be general chords of $E$ with $L_i \cap E = \{p_i, q_i\}, 1 \le i \le g-1$. Let $Y=L_1 \cup \ldots L_{g-1}$ and let 
$$X'=E \cup Y \subset \P^3.$$ 
\renewcommand{\proofname}{Proof}

\begin{claim}
\label{1}
With the above notation, we have
$${N_{X'}}_{|_Y} \otimes \omega_{X'}(-3) \cong \bigoplus\limits_{i=1}^{g-1} \O_{L_i}(-1)^{\oplus 2}.$$
\end{claim}
\begin{proof}
For each $i \in \{1, \ldots, g-1\}$ set $Z_i = E \cup L_1 \cup \ldots L_{i-1} \cup L_{i+1} \cup \ldots \cup L_{g-1}$. By \cite[Prop.~3.5]{lv}, if we choose points $r_i \in T_{p_i}Z_i \setminus \{p_i\}, r'_i \in T_{q_i}Z_i \setminus \{q_i\}$, we have
\begin{equation}
\label{rette}
{N_{X'}}_{|_{L_i}} \cong N_{L_i} \hskip -.1cm \xymatrix{[p_i \ar[r(0.6)]^{\hskip .2cm +} & \hskip -.4cm r_i]} \hskip -.2cm \xymatrix{[q_i \ar[r(0.6)]^{\hskip .2cm +} & \hskip -.4cm r'_i]} = N_{L_i} \hskip -.1cm \xymatrix{[p_i \ar[r(0.6)]^{\hskip .2cm -} & \hskip -.4cm r_i]} \hskip -.2cm \xymatrix{[q_i \ar[r(0.6)]^{\hskip .2cm -} & \hskip -.4cm r'_i]}(p_i+q_i).
\end{equation}
On the other hand, let $f : L_i \to \P^3$ be the embedding. Since $r_i \not\in L_i$, we have that $f \circ \pi_{r_i} : L_i \to \P^2$
is an embedding, and therefore $N_{f \to \{r_i\}}=\O_{L_i}(1)$ and $N_{L_i}/N_{f \to \{r_i\}}$ is a line bundle on $L_i$.
Therefore 
$$(N_{L_i})_{|\{p_i\}}/(N_{f \to \{r_i\}})_{|\{p_i\}} \cong \O_{\{p_i\}}$$
and, similarly,
$$(N_{L_i})_{|\{q_i\}}/(N_{f \to \{r'_i\}})_{|\{q_i\}} \cong \O_{\{q_i\}}.$$
Hence
$$N_{L_i} \hskip -.1cm \xymatrix{[p_i \ar[r(0.6)]^{\hskip .2cm -} & \hskip -.4cm r_i]} \hskip -.2cm \xymatrix{[q_i \ar[r(0.6)]^{\hskip .2cm -} & \hskip -.4cm r'_i]} =\Ker \{N_{L_i} \to \O_{\{p_i\}} \oplus \O_{\{q_i\}}\}=\Ker \{\O_{\P^1}(1)^{\oplus 2} \to \O_{\{p_i\}} \oplus \O_{\{q_i\}}\}\cong \O_{\P^1}^{\oplus 2}.$$
It follows by \eqref{rette} that 
$${N_{X'}}_{|_{L_i}} \cong \O_{\P^1}(p_i+q_i)^{\oplus 2} = \O_{\P^1}(2)^{\oplus 2}$$
and therefore
$${N_{X'}}_{|_Y} \otimes \omega_{X'}(-3) \cong \bigoplus\limits_{i=1}^{g-1} \O_{L_i}(-1)^{\oplus 2}.$$
\end{proof}

\begin{claim}
\label{5}
With the above notation, we have that $H^0({N_{X'}}_{|_E}(-3))=0$.
\end{claim}
\begin{proof}
By \cite[Prop.~3.5]{lv}, since $q_i \in L_i \setminus \{p_i\}, p_i \in L_i \setminus \{q_i\}$, we have
$${N_{X'}}_{|_E} \cong N_E \hskip -.1cm \xymatrix{[p_1 \ar[r(0.6)]^{\hskip .2cm +} & \hskip -.4cm q_1]} \cdots \xymatrix{[p_{g-1} \ar[r(0.6)]^{\hskip .4cm +} & \hskip -.4cm q_{g-1}]} \xymatrix{[q_1 \ar[r(0.6)]^{\hskip .2cm +} & \hskip -.4cm p_1]} \cdots \xymatrix{[q_{g-1} \ar[r(0.6)]^{\hskip .4cm +} & \hskip -.4cm p_{g-1}]}.$$
Next, specialize cyclically $q_1 \to p_2, q_2 \to p_3, \ldots, q_{g-1} \to p_1$. Then, we get a flat family of vector bundles with central fiber
$$N_E \hskip -.1cm \xymatrix{[p_1 \ar[r(0.6)]^{\hskip .2cm +} & \hskip -.4cm p_2]} \cdots \xymatrix{[p_{g-1} \ar[r(0.6)]^{\hskip .4cm +} & \hskip -.4cm p_1]}\xymatrix{[p_2 \ar[r(0.6)]^{\hskip .2cm +} & \hskip -.4cm p_1]} \cdots \xymatrix{[p_1 \ar[r(0.6)]^{\hskip .4cm +} & \hskip -.3cm p_{g-1}]}.$$
We now claim that
\begin{equation}
\label{ne}
N_E \hskip -.1cm \xymatrix{[p_1 \ar[r(0.6)]^{\hskip .2cm +} & \hskip -.4cm p_2]} \cdots \xymatrix{[p_{g-1} \ar[r(0.6)]^{\hskip .4cm +} & \hskip -.4cm p_1]}\xymatrix{[p_2 \ar[r(0.6)]^{\hskip .2cm +} & \hskip -.4cm p_1]} \cdots \xymatrix{[p_1 \ar[r(0.6)]^{\hskip .4cm +} & \hskip -.3cm p_{g-1}]} \cong N_E(p_1+ \ldots + p_{g-1}).
\end{equation}
To see \eqref{ne}, let $f : E \to \P^3$ be the embedding. For every point $q \in E$, any modification of type $p$ pointing towards the linear space $\{q\}$, is obtained  by taking a quotient $(N_E)_{|\{p\}}/(N_{f \to \{q\}})_{|\{p\}}$. Since $N_E/N_{f \to \{q\}}$ is a line bundle on $E$, we have that 
$$(N_E)_{|\{p\}}/(N_{f \to \{q\}})_{|\{p\}} \cong \O_{\{p\}}.$$
Therefore, 
$$N_E \hskip -.1cm \xymatrix{[p_1 \ar[r(0.6)]^{\hskip .2cm -} & \hskip -.4cm p_2]} \cdots \xymatrix{[p_{g-1} \ar[r(0.6)]^{\hskip .4cm -} & \hskip -.4cm p_1]}\xymatrix{[p_2 \ar[r(0.6)]^{\hskip .2cm -} & \hskip -.4cm p_1]} \cdots \xymatrix{[p_1 \ar[r(0.6)]^{\hskip .4cm -} & \hskip -.3cm p_{g-1}]} =$$ 
$$\hskip .9cm =\Ker \{N_E \to \O_{\{p_1\}} \oplus \ldots \O_{\{p_{g-1}\}} \oplus \O_{\{p_2\}} \ldots \O_{\{p_{g-1}\}} \oplus \O_{\{p_1\}}\}=$$
$$\hskip -.9cm = \Ker \{N_E \to \O_{\{p_1\}}^{\oplus 2} \oplus \ldots \O_{\{p_{g-1}\}} ^{\oplus 2}\}= N_E(-p_1-\ldots p_{g-1}).$$
We deduce that
$$N_E \hskip -.1cm \xymatrix{[p_1 \ar[r(0.6)]^{\hskip .2cm +} & \hskip -.4cm p_2]} \cdots \xymatrix{[p_{g-1} \ar[r(0.6)]^{\hskip .4cm +} & \hskip -.4cm p_1]}\xymatrix{[p_2 \ar[r(0.6)]^{\hskip .2cm +} & \hskip -.4cm p_1]} \cdots \xymatrix{[p_1 \ar[r(0.6)]^{\hskip .4cm +} & \hskip -.3cm p_{g-1}]}=$$
$$=N_E \hskip -.1cm \xymatrix{[p_1 \ar[r(0.6)]^{\hskip .2cm -} & \hskip -.4cm p_2]} \cdots \xymatrix{[p_{g-1} \ar[r(0.6)]^{\hskip .4cm -} & \hskip -.4cm p_1]}\xymatrix{[p_2 \ar[r(0.6)]^{\hskip .2cm -} & \hskip -.4cm p_1]} \cdots \xymatrix{[p_1 \ar[r(0.6)]^{\hskip .4cm -} & \hskip -.3cm p_{g-1}]}(2p_1+ \ldots 2 p_{g-1}) \cong N_E(p_1+ \ldots + p_{g-1})$$
and this proves \eqref{ne}.

Thus we have proved that ${N_{X'}}_{|_E}(-3)$ specializes to $N_E(p_1+ \ldots + p_{g-1})(-3)$ and to prove the claim it suffices to show that $H^0(N_E(p_1+ \ldots + p_{g-1})(-3))=0$. Now, $\chi(N_E(p_1+ \ldots + p_{g-1})(-3))=0$ and $\O_E(p_1+ \ldots + p_{g-1})(-3)$ is a general line bundle of degree $-2(g-1)$, hence $H^0(N_E(p_1+ \ldots + p_{g-1})(-3))=0$ by interpolation \cite[Thm.~1.2]{aly},  \cite[Thm.~1.4]{lv}.
\end{proof}
We now conclude the proof of Theorem \ref{nuovo2}(i). We have that $X' = E \cup Y$ is a nodal nonspecial curve with $\deg X'=2g-2$ and $p_a(X')=g$. In particular $X'$ is smoothable in $\P^3$. Moreover, the exact sequence
$$0 \to {N_{X'}}_{|_E}(-3) \to N_{X'} \otimes \omega_{X'}(-3) \to {N_{X'}}_{|_Y} \otimes \omega_{X'}(-3) \to 0$$
and Claims \ref{1} and \ref{5} show that $H^0(N_{X'} \otimes \omega_{X'}(-3))=0$. 
Then a general smoothing $X \subset  \P^3$ of $X'$ is such that $\deg X = 2g-2, g(X)=g$ and $N_X^*(4)$ is Ulrich. Finally $X$ is not subcanonical, for otherwise we would have that $K_C=H$, contradicting the fact that $X$ is nonspecial.
\end{proof}
\renewcommand{\proofname}{Proof}

Finally, we prove Theorem \ref{nuovo2}(ii).

\renewcommand{\proofname}{Proof of Theorem \ref{nuovo2}(ii)}
\begin{proof}
Let $\overline C \subset \P^3$ be a general nonspecial curve of degree $3g-3$ and genus $g \ge 3$. Since $\overline C$ is nonspecial, we can embed it as a nonspecial curve $C \subset \P^4$ of degree $3g-2$ and genus $g$,  so that $\overline C$ is just the projection from a general point $q \in C$\footnote{For the construction of the curves $C$ and $\overline C$ we could also have appealed to \cite[Thm.~1]{eh2}, \cite{cpjllv}.}.
Let $p_i, 1 \le i \le 2g-3$ be general points on $C$ and let $L_i$ be general $1$-secant lines, each meeting $C$ at $p_i$.  Let $Y=L_1 \cup \ldots L_{2g-3},$ let $q_i \in L_i \setminus \{p_i\}$ and let 
$$X'=C \cup Y \subset \P^4.$$ 
Note that $X'$ is nonspecial, hence we can choose a flat family $\mathcal X \to \Delta$, with smooth total space, having as a general fiber $X$, a general curve in $\P^4$ of genus $g$ and degree $5g-5$ and special fiber $X'$. Since $C$ is a Cartier divisor on $\mathcal X$ we have that $\omega_{\mathcal X/\Delta}(-2)(C)$ is a line bundle on $\mathcal X$. Note that, since $C+Y$ is a fiber, we have that $C_{|C}=-Y_{|C}$. Setting 
$$\L= \omega_{\mathcal X/\Delta}(-2)(C)_{|X'}$$
we have that 
$$\L_{|C}=\omega_C(Y \cap C)(-2)(-Y_{|C})=\omega_C(-2)$$ 
and, for each $i$,
$$\L_{|L_i}=\omega_{L_i}(L_i \cap C)(-2)(L_i \cap C)=\O_{L_i}(-2).$$
\renewcommand{\proofname}{Proof}
Now we have
\begin{claim} 
\label{6}
Let $\E =N_C(p_1+ \ldots + p_{2g-3}) \hskip -.1cm \xymatrix{[2p_1 \ar[r(0.6)] & \hskip -.4cm q_1]} \cdots \xymatrix{[2p_{2g-3} \ar[r(0.6)] & \hskip -.4cm q_{2g-3}]} \otimes \omega_C(-2)$. If $H^0(\E)=0$, then $H^0(N_{X'} \otimes \L)=0$.
\end{claim}
\begin{proof}
Note that $\chi(\E)=0$. In fact, we have that
$$\chi(N_C(p_1+ \ldots + p_{2g-3}))=20g-18$$
so that, using \cite[Prop.~3.3(a)]{aly}, we get
$$\chi(N_C(p_1+ \ldots + p_{2g-3}) \hskip -.1cm \xymatrix{[2p_1 \ar[r(0.6)] & \hskip -.4cm q_1]} \cdots \xymatrix{[2p_{2g-3} \ar[r(0.6)] & \hskip -.4cm q_{2g-3}]})=\chi(N_C(p_1+ \ldots + p_{2g-3}))-4(2g-3)=12g-6$$
and therefore
$$\chi(\E)=12g-6+3 \deg(\omega_C(-2))=0.$$
Since $H^0(\E)=\chi(\E)=0$, we have that $H^1(\E)=0$ and for every divisor $D>0$ on $C$ we get that $H^0(\E(-D)) \subseteq H^0(\E)=0$, hence $\E$ satisfies interpolation. As in \cite[Lemma 8.5]{aly} (see also \cite[Lemma 5.2]{lv}), we get that $N_{X'} \otimes \L$ satisfies interpolation, so in particular $H^1(N_{X'} \otimes \L)=0$. On the other hand, $\chi(N_{X'} \otimes \L)=\chi(N_{X'})+3\deg \L=0$, and therefore  $H^0(N_{X'} \otimes \L)=0$.
\end{proof}
In order to show that $H^0(\E)=0$, 
we specialize each $q_i$ to $q$, so that $\E$ specializes to
$$N_C(p_1+ \ldots + p_{2g-3}) \hskip -.1cm \xymatrix{[2p_1 \ar[r(0.6)] & \hskip -.4cm q]} \cdots \xymatrix{[2p_{2g-3} \ar[r(0.6)] & \hskip -.4cm q]} \otimes \omega_C(-2) \cong $$
$$\hskip -.4cm \cong N_C(p_1+ \ldots + p_{2g-3}) \hskip -.1cm \xymatrix{[2p_1+ \ldots + 2p_{2g-3} \ar[r(0.8)] & \hskip -.4cm q]} \hskip -.15cm \otimes \hskip .1cm \omega_C(-2).$$
We have
\begin{claim} 
\label{4}
Set $\E_1 = N_C(p_1+ \ldots + p_{2g-3})\xymatrix{[2p_1+ \ldots + 2p_{2g-3} \ar[r(0.8)] & \hskip -.4cm q]} \otimes \omega_C(-2).$
Then there is an exact sequence 
\begin{equation}
\label{e1}
0 \to \omega_C(-1)(2q+p_1+ \ldots + p_{2g-3}) \to \E_1 \to N_{\overline C/\P^3}(q-p_1- \ldots - p_{2g-3}) \otimes \omega_C(-2) \to 0.
\end{equation}
\end{claim}
\begin{proof}
Set $D=p_1+ \ldots + p_{2g-3}$. By \cite[page 10]{lv}, the pointing (towards $q$) bundle exact sequence \cite[(3.4)]{lv}, becomes
$$0 \to \O_C(1)(2q) \to N_C \to N_{\overline C/\P^3}(q) \to 0$$
hence, tensoring by $\O_C(D)$, we find an 
exact sequence
\begin{equation}
\label{succ}
0 \to \O_C(1)(2q+D) \to N_C(D) \to N_{\overline C/\P^3}(q+D) \to 0.
\end{equation}
Applying the elementary modification $\xymatrix{[2D \ar[r(0.6)] & \hskip -.5cm q]}$ \hskip -.2cm we get the exact sequence
$$0 \to \O_C(1)(2q+D) \to N_C(D)\xymatrix{[2D \ar[r(0.6)] & \hskip -.5cm q]}  \to N_{\overline C/\P^3}(q+D)(-2D) \to 0$$
and tensoring by $\omega_C(-2)$ we get \eqref{e1}.
\end{proof}
Since $\omega_C(-1)(2q+p_1+ \ldots + p_{2g-3})$ is a general line bundle of degree $g-1$ on $C$, we have that 
$$H^0(\omega_C(-1)(2q+p_1+ \ldots + p_{2g-3}))=0.$$ 
Also, $\omega_C(-2)(q-p_1- \ldots - p_{2g-3})$ a general line bundle of degree $-6(g-1)$ on $C$ and 
$$\chi(N_{\overline C/\P^3}(q-p_1- \ldots - p_{2g-3})  \otimes \omega_C(-2))=0$$ 
hence 
$$H^0(N_{\overline C/\P^3}(q-p_1- \ldots - p_{2g-3})  \otimes \omega_C(-2))=0$$ by interpolation \cite[Thm.~1.2]{aly}, \cite[Thm.~1.4]{lv}.
Then \eqref{e1} implies that $H^0(\E_1)=0$ and therefore, since $\E$ specializes to $\E_1$, also that $H^0(\E)=0$. It follows by Claim 
\ref{6} that $H^0(N_{X'} \otimes \L)=0$ and then $H^0(N_X \otimes \omega_X(-2))=0$ by semicontinuity. Hence $N_X^*(3)$ is Ulrich and, clearly, $X$ is not subcanonical.
\end{proof}

\end{document}